\newtheorem{theorem}{Theorem}[section]
\newtheorem{proposition}[theorem]{Proposition}
\newtheorem{corollary}[theorem]{Corollary}
\theoremstyle{definition}
\newtheorem{definition}[theorem]{Definition}
\newtheorem{example}[theorem]{Example}
\theoremstyle{remark}
\newtheorem{remark}[theorem]{Remark}
\numberwithin{equation}{section}
\newcommand{\R}{{\mathbb R}}
\newcommand{\N}{{\mathbb N}}
\newcommand{\XB}{{\mathbb X}}
\newcommand{\Y}{{\mathbb Y}}
\newcommand{\C}{{\mathbb C}}
\newcommand{\h}{{\mathcal H}}
\newcommand{\obj}{{\perp_{B}}}
\newcommand{\orbj}{{\perp_{B}^r}}
\newcommand{\objp}{{\perp_{B}^p}}
\newcommand{\oi}{{\perp_{I}}}
\newcommand{\oip}{{\perp_{I}^p}}
\newcommand{\oro}{{\perp_{R}}}
\newcommand{\osi}{{\perp_{SI}}}
\def\bb{\mathcal{B}}
\def\X{\mathcal{X}}
\def\cH{\mathcal{H}}
\newcommand{\bh}{\mathcal{B}(\mathcal{H})}
\newcommand{\bit}{\begin{itemize}}
\newcommand{\eit}{\end{itemize}}
\newcommand{\be}{\begin{enumerate}}
\newcommand{\ee}{\end{enumerate}}
\begin{document}

\title{A study of Orthogonality of bounded linear Operators}

\author[T. Bottazzi, C. Conde, D. Sain]{Tamara Bottazzi$^1$, Cristian Conde$^{2,3}$ and Debmalya Sain$^{4}$}

\address{$^1$ Sede Andina, Universidad Nacional de R\'io Negro, (8400) S.C. de Bariloche, Argentina}

\address{$^2$Instituto Argentino de Matem\'atica ``Alberto P. Calder\'on", Saavedra 15 3er. piso, (C1083ACA) Buenos Aires, Argentina}

\address{$^3$Instituto de Ciencias, Universidad Nacional de Gral. Sarmiento, J.	
M. Gutierrez 1150, (B1613GSX) Los Polvorines, Argentina}

\address{$^4$Indian Institute of Science, Bengaluru, Karnataka, India}

\email{tbottazzi@unrn.edu.ar}
\email{cconde@ungs.edu.ar}
\email{saindebmalya@gmail.com}
\subjclass[2010]{Primary: 47A30, 47A63. Secondary: 47L05.}

\keywords{Birkhoff-James orthogonality; isosceles orthogonality; norm attainment set; disjoint support.}
\maketitle
\begin{abstract}
	We study  Birkhoff-James orthogonality and isosceles orthogonality of bounded  linear operators between Hilbert spaces and Banach spaces.  We explore Birkhoff-James orthogonality of bounded linear operators in light of a new notion introduced by us and also discuss some of the possible applications in this regard. We also study isosceles orthogonality of bounded (positive) linear operators on a Hilbert space and  some of the related properties, including that of operators having disjoint support. We further explore the relations between Birkhoff-James orthogonality and isosceles orthogonality in a general Banach space. 
	\keywords{Birkhoff-James orthogonality \and isosceles orthogonality \and norm attainment set \and disjoint support}
	\subjclass[2010]{Primary: 47A63, 51F20. Secondary: 47L05, 47A30.}
\end{abstract}

\section{Introduction and preliminaries}
The primary purpose of the present paper is to explore orthogonality of bounded linear operators between Hilbert spaces and Banach spaces. Unlike the Hilbert space case, there is no universal notion of orthogonality in a Banach space. However, it is possible to have several notions of orthogonality in such space, each of which generalizes some particular aspect of Hilbert space orthogonality. Indeed, one of the root causes of the vast differences between the geometries of Hilbert spaces and Banach spaces is the lack of a standard orthogonality notion in the later case. On the other hand, this makes the study of orthogonality of bounded linear operators an interesting and deeply rewarding area of research. Motivated by this, several authors have explored orthogonality of bounded linear operators in recent times \cite{alonso-martini-wu}, \cite{bhatta-grover}, \cite{bhatia-semrl}, \cite{benitez-fernandez-soriano}, \cite{bcmz},  \cite{chmielinski}, \cite{MSP}, \cite{PSG}, \cite{PSMK}, \cite{rao}, \cite{sain_2}, \cite{sain1}, \cite{sain_paul}, \cite{turnsek}, \cite{Pawel} and \cite{Zamani}, and have obtained many interesting results involving the geometry of operator spaces. In this paper, among other things, we extend, improve and generalize some of the earlier results on orthogonality of bounded linear operators. Without further ado, let us first establish our notations and terminologies to be used throughout the paper.

Letters $\XB, \Y$ denote Banach spaces, over the field $ \mathbb{K}\in\{\mathbb{R}, \mathbb{C}\}$.  Let $B_{\XB}=\{x\in \XB: \|x\|\leq 1\}$ and $S_{\XB}=\{x\in \XB:\|x\|=1\}$ be the unit ball and the unit sphere of $\XB$ respectively. Let $\mathcal{B}(\XB, \Y)$ and $\mathcal{K}(\XB, \Y)$ denote the Banach space of all bounded linear operators and compact operators from $\XB$ to $\Y$ respectively, endowed with the usual operator norm. We write $\mathcal{B}(\XB, \Y)=\mathcal{B}(\XB)$ and $\mathcal{K}(\XB, \Y)=\mathcal{K}(\XB)$ if $\XB=\Y$. The symbol $I_{\XB}$ stands for the identity operator on $\XB.$ We omit the suffix in case there is no confusion. We reserve the symbol $ \h $ for a Hilbert space over the field $ \mathbb{K}. $ Throughout the paper, we consider only separable Hilbert spaces. In this paper, mostly in the context of bounded linear operators, we discuss three of the most important orthogonality types in a Banach space, namely, \textit{Birkhoff-James orthogonality \cite{birkhoff} and \cite{james}, isosceles orthogonality \cite{james2} and Roberts orthogonality \cite{roberts}}.

Let us first state the relevant definitions, in the more general setting of a normed space $\X$ over $\mathbb{K}$.

\begin{definition} \label{defiBJ}
	For any two elements $x,y \in \X,$  we  say that $x$ is
	\textit{Birkhoff-James orthogonal} to  $y,$ written as $x \perp_{B} y,$ if  for all $\lambda \in \mathbb{K},$ the following holds:
	\begin{equation}
	\|x\|\leq \|x+\lambda y\|.
	\end{equation}
\end{definition}

\begin{definition} \label{defiiso}
	For any two elements $x,y \in \X$  and $\mathbb{K}=\mathbb{R},$ we  say that $x$ is
	\textit{isosceles orthogonal} to  $y,$ written as $x \perp_{I} y,$ if the following holds:
	\begin{equation}
	\|x+y\| = \|x-y\|.
	\end{equation}
	In complex normed spaces, we consider the following orthogonality relation
	\begin{equation} \label{defiiso2}
	x\perp_{I}y\Leftrightarrow \left\{
	\begin{array}{c l r l}
	\|x+y\|= \|x-y\|\\
	\|x+iy\|= \|x-iy\|. \\
	\end{array}
	\right.
	\end{equation}	
\end{definition}

\begin{definition} \label{defiRoberts}
	For any two elements $x,y \in \X,$  we  say that $x$ is
	\textit{Roberts orthogonal} to  $y,$ written as $x \perp_{R} y,$ if  for all $\lambda \in \mathbb{K},$ the following holds:
	\begin{equation}
	\|x+\lambda y\| =  \|x-\lambda y\|.
	\end{equation}
\end{definition}

It is easy to see that Roberts orthogonality implies Birkhoff-James orthogonality but the converse is not necessarily true.

In order to have a better 
description of Birkhoff-James orthogonality of bounded linear operators between Banach spaces, we introduce the following notation for any $T, A \in \bb(\XB, \Y)$:
$$
\mathcal{O}_{T, A}=\{x\in S_{\XB}: Tx\perp_{B}Ax\}.
$$
Given $T\in \bb(\XB, \Y)$, define the norm attainment set of $T$ as
$$M_{T}=\{x\in S_\XB:\ \|Tx\|=\|T\|\}.$$

As observed in \cite{bhatia-semrl}, \cite{rao}, \cite{sain_2}, \cite{sain1} and \cite{SPM}, the structure of the norm attainment set of a bounded linear operator is of central importance in studying Birkhoff-James orthogonality and smoothness of the said operator. On the other hand, it was illustrated in \cite{MSP} that the notion of the norm attainment set of a bounded linear operator is deeply related to the geometry of the space of bounded linear operators between Banach spaces. In the context of $ T \in \bh, $ the corresponding norm attainment set $ M_T $ was completely characterized in \cite{sain1}. We would like to remark that Birkhoff-James orthogonality of bounded linear operators on a finite-dimensional Hilbert space $ \h $ was completely characterized by Bhatia and $\breve{S}$emrl in \cite{bhatia-semrl}:
\[\textit{For $ T, A \in \bh, ~ T \perp_{B} A \Longleftrightarrow \mathcal{O}_{T, A} \cap M_T \neq \emptyset. $}\]
This motivates us to explore the structure of $ \mathcal{O}_{T, A}, $ for two given operators $ T, A \in \mathcal{B}(\XB, \Y). $ In order to study the properties of the set $ \mathcal{O}_{T,A}, $ in the context of a real Banach space, we require the following notions introduced in \cite{sain_2}.
\begin{definition}
	Let $ \X$ be a real  normed space. Let $ x,y \in \X. $ We say that $ y \in x^{+} $ if $ \|x+\lambda y\| \geq \|x\| $ for all $ \lambda \geq 0. $ Accordingly, we say that $ y \in x^{-} $ if $ \|x+\lambda y\| \geq \|x\| $ for all $ \lambda \leq 0. $
\end{definition}
In this context we would like to remark that while studying orthogonality of bounded linear operators, Bhattacharyya and  Grover \cite{bhatta-grover} also considered the following weaker notion of orthogonality. Let  $\X$ be a real or complex normed space and let $x,y\in \X$. We say that $x$ is {\it r-orthogonal to} $y$, denoted by $x\orbj y,$ if $ \| x+\lambda y \| \geq \| x \| $ for all $ \lambda \in \mathbb{R}. $ Of course, it is trivial to observe that in case $ \X $ is real, $x\orbj y$ if and only if $y\in x^+$ and $y\in x^-$.\\

The notion of Birkhoff-James orthogonality is intimately connected with the notion of smoothness in Banach spaces. A non-zero element $x \in \mathbb{X}$ is said to be a smooth point if there exists a unique norm one functional $f \in \mathbb{X}^{*}$ such that $f(x) = \| x \|$. We would like to note that the study of smoothness in the space of bounded linear operators between Banach spaces is an active area of interest, and we refer the readers to  \cite{PSG}, \cite{rao} and \cite{SPM}.

For  $A \in \bh,$ we use the notations $A^*$, $R(A)$, $N(A)$, to 
denote the adjoint, the range and the kernel of $A$ respectively.  If $A, B$ are self-adjoint elements of $\bh,$ we write  $A \leq B$ whenever $\langle Ax,x\rangle \leq \langle Bx,x\rangle$ 
for all $x\in \h$. An element $A\in \bh$ such that $A\geq0$ is called positive.  For every $A\geq 0$, there exists a unique positive $A^{1/2}\in \bh$ such that $A=(A^{1/2})^2$. For any $\bb\subseteq \bh$, $\bb^+$ denotes the subset of all positive operators of $\bb$.

For any $T \in \bh$, we can write
$T = Re(T) + i Im(T),$
where $Re(T) =\frac{T+T^*}{2}$ and  $Im(T)=\frac{T-T^*}{2i}$ are self-adjoint operators. This is the so called Cartesian decomposition of $T$.

Let us recall that if $M\subseteq \h$ is a closed subspace of $\h$, then $P_M$ denotes the orthogonal projection onto $M$ of $\cH$.

For any compact operator $A\in \mathcal{K}(\h)$, let $s_1(A), s_2(A),\cdots $ be the singular values of $A$, i.e., the eigenvalues of the ``absolute value-norm"  $|A| = (A^*A)^{\frac {1}{2}}$ of $A$, in decreasing order and repeated according to multiplicity. 

The notion of unitarily invariant norm (UIN)  can be defined for operators
on Hilbert spaces as a norm $||| . |||$   that satisfies  the invariance property 
$
||| UXV|||=|||X|||,
$
for any pair of unitary operators $U,V\in \bh$.   Recall that each UIN  is defined on a natural subclass $\mathcal{J}\subseteq\mathcal{K}(\h),$ called the norm ideal associated with the norm $|||.|||$.

If $A\in \mathcal{K}(\h)$ and $p>0$, let
\begin{equation} \label{defipllel}
{\|A\|}_p = \left(\sum_{i = 1}^\infty s_i(A)^p\right)^{\frac{1}{p}} 
= \left({\rm tr}|A|^p\right)^{\frac{1}{p}},
\end{equation}
where $\rm tr$ is the usual trace functional, i.e.
${\rm tr}(A)=\sum_{j=1}^{\infty} \langle Ae_j,e_j\rangle,$
and  $\{e_j\}_{j=1}^{\infty}$ is an orthonormal basis of $\mathcal{H}$. 
Equality \eqref{defipllel} defines a norm (quasi-norm) on the ideal 
$\mathbb{B}_p(\mathcal{H}) = \{A\in  \mathcal{K}(\h): {\|A\|}_p<\infty\}$ 
for $1\leq p<\infty$ ($0< p <1$), called the \textit{$p$-Schatten class}.

The study of orthogonality of bounded linear operators is also related to the following notion of operators having disjoint support. 

\begin{definition}
	Let $ \h $ be a real or complex Hilbert space. Two operators $A, B\in\bh$ have disjoint support if and only if $AB^*=B^*A=0$.
	
\end{definition}

We would like to remark that the above definition is not the original one introduced by Arazy in \cite{arazy}, but nevertheless it was proved by Lioudaki in Proposition 2.1.8 of \cite{Lioudaki} that both notions are equivalent in $\mathbb{B}_p(\mathcal{H}$). We also refer the readers to \cite{karn} for a related notion of algebraic orthogonality in the setting of $ C^{*}- $algebras.

\section{Brief outline of the paper}
The main results of this paper are demarcated into three sections. In Section 3, we exclusively study Birkhoff-James orthogonality of bounded linear operators between Hilbert spaces and Banach spaces. As mentioned in Remark $ 3.1 $ of \cite{bhatia-semrl}, the finite-dimensional Bhatia-$\breve{S}$emrl theorem can be extended to the setting of infinite-dimensional Hilbert spaces by considering norming sequences for a bounded linear operator, instead of norm attaining vectors corresponding to the said operator. However, we show that even in case of bounded linear operators between infinite-dimensional Banach spaces, it is possible to extend the the finite-dimensional Bhatia-$\breve{S}$emrl theorem verbatim, under certain additional assumptions. Let us mention here that our observations in this context can be regarded as an extension of Theorem $ 3.1 $ and Theorem $ 4.1 $ of \cite{Pawel}, where only the real case was considered. We also explore the properties of the set $ \mathcal{O}_{T, A}=\{x\in S_{\XB}: Tx\perp_{B}Ax\}, $ for any $T, A \in \bb(\XB, \Y)$ and obtain a characterization for a Hilbert space to be finite-dimensional in terms of this newly introduced notion. The study of $ \mathcal{O}_{T, A} $ may be regarded as complementary to the study of $ M_T $ done in \cite{rao}, \cite{sain_2}, \cite{sain_paul} and \cite{SPM}. In Section 4, we focus on  orthogonality of bounded linear operators and positive operators on a Hilbert space. We give a complete characterization  for isosceles orthogonality of two positive bounded linear operators.  In Section 5, we discuss some relations between the two orthogonality types, Birkhoff-James orthogonality and isosceles orthogonality. Our results in this section are valid in the context of any normed space and not just for operators between Banach spaces. We end the present paper by giving examples in the space of bounded linear operators to illustrate that Roberts orthogonality is much stronger (and therefore, restrictive) than either of Birkhoff-James orthogonality and isosceles orthogonality.

\section{Birkhoff-James Orthogonality of bounded linear operators}

We begin this section by obtaining a verbatim extension of the finite-dimensional Bhatia-$\breve{S}$emrl theorem to the infinite-dimensional setting, with an additional assumption on the norm attainment set of one of the operators. We would like to remark that such an extension was obtained by W\'{o}jcik in \cite{Pawel}, in the context of real Banach spaces, with additional geometric assumptions of strict convexity and smoothness on the range space. However, we cover the cases of both real and complex Banach spaces.

\begin{theorem}
	Let $ \XB $ and $ \Y $ be Banach spaces, either both real, or, both complex. Let $ \XB $ be reflexive. Let $ T,A \in \mathcal{K}(\XB, \Y) $ be such that $ M_T = \{ \pm x_0 \} $ in the real case and $ M_T = \{ e^{i\theta}x_0 : \theta \in [0, 2\pi) \} $ in the complex case, where $ x_0 \in S_{\XB}. $ Then $ T\obj A $ if and only if $\mathcal{O}_{T, A}\cap M_T\neq \emptyset$.
\end{theorem}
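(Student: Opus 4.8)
The plan is to prove the two implications of the equivalence separately. The implication ``$\mathcal{O}_{T,A}\cap M_T\neq\emptyset\implies T\obj A$'' requires nothing beyond the definitions: if $x\in M_T$ satisfies $Tx\obj Ax$, then for every scalar $\lambda$ one has $\|T+\lambda A\|\geq\|(T+\lambda A)x\|=\|Tx+\lambda Ax\|\geq\|Tx\|=\|T\|$, where the last inequality is the defining property of $Tx\obj Ax$; hence $T\obj A$. So all of the work, and all of the use of reflexivity, compactness, and the rigidity of $M_T$, goes into the converse.

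For the converse, assume $T\obj A$; discarding the trivial case $T=0$ (where the hypothesis on $M_T$ forces $\dim\XB=1$ and everything is immediate), assume $T\neq 0$. I would first record two standard ingredients. (i) A compact operator from a reflexive space attains its norm: given $K\in\cK(\XB,\Y)$ and $z_n\in S_\XB$ with $\|Kz_n\|\to\|K\|$, the Eberlein--\v{S}mulian theorem (as $B_\XB$ is weakly compact) yields a subsequence $z_{n_k}\rightharpoonup z\in B_\XB$, compactness of $K$ upgrades this to $Kz_{n_k}\to Kz$ in norm, whence $\|Kz\|=\|K\|$ and then $\|z\|=1$, so $z\in M_K$. (ii) Since $T$ and $A$ are compact, $\overline{R(T)+R(A)}$ is a separable closed subspace of $\Y$; replacing $\Y$ by it changes none of the relevant norms or orthogonality relations, so I may assume $\Y$ separable and hence that bounded sequences in $\Y^*$ have weak-$*$ convergent subsequences (equivalently one could use weak-$*$ convergent subnets via Banach--Alaoglu). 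Next, it suffices to prove $Tx_0\obj Ax_0$, since $x_0\in M_T$. Suppose this fails. Using homogeneity of Birkhoff--James orthogonality together with the substitution $A\mapsto -A$ in the real case, or $A\mapsto e^{i\phi}A$ for a suitable $\phi$ in the complex case (each of which preserves $T\obj A$, the failure of $Tx_0\obj Ax_0$, and $M_T$), I may assume there is $r>0$ with $\|Tx_0+rAx_0\|<\|T\|$; by convexity of $\lambda\mapsto\|Tx_0+\lambda Ax_0\|$ this persists for all $\lambda\in(0,r]$.

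Now the core construction. For all large $n$ we have $\|(T+\tfrac1n A)x_0\|<\|T\|\leq\|T+\tfrac1n A\|$ (the last step from $T\obj A$), so $x_0\notin M_{T+\frac1n A}$; by (i) pick $y_n\in M_{T+\frac1n A}\cap S_\XB$. By reflexivity pass to a subsequence with $y_n\rightharpoonup y$; compactness of $T$ and $A$ gives $Ty_n\to Ty$ and $Ay_n\to Ay$ in norm, hence $(T+\tfrac1n A)y_n\to Ty$, so $\|Ty\|=\lim\|T+\tfrac1n A\|=\|T\|$, forcing $\|y\|=1$ and $y\in M_T$; after rescaling each $y_n$ by a suitable unimodular scalar (which keeps it in $M_{T+\frac1n A}$) I may assume $y=x_0$, so $Ty_n\to Tx_0$ and $Ay_n\to Ax_0$. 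Choose $g_n\in S_{\Y^*}$ with $g_n\big((T+\tfrac1n A)y_n\big)=\|T+\tfrac1n A\|$ and pass to a weak-$*$ limit $g$, $\|g\|\leq 1$. Since $(T+\tfrac1n A)y_n\to Tx_0$ in norm, $g(Tx_0)=\lim\|T+\tfrac1n A\|=\|T\|=\|Tx_0\|$, so $\|g\|=1$. From $\|T+\tfrac1n A\|=\mathrm{Re}\,g_n(Ty_n)+\tfrac1n\mathrm{Re}\,g_n(Ay_n)$ and $\mathrm{Re}\,g_n(Ty_n)\leq\|Ty_n\|\leq\|T\|$ I obtain $\mathrm{Re}\,g_n(Ay_n)\geq n\big(\|T+\tfrac1n A\|-\|T\|\big)\geq 0$, so in the limit $\mathrm{Re}\,g(Ax_0)\geq 0$. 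But then $\|Tx_0+rAx_0\|\geq\mathrm{Re}\,g(Tx_0+rAx_0)=\|T\|+r\,\mathrm{Re}\,g(Ax_0)\geq\|T\|$, contradicting the choice of $r$. Hence $Tx_0\obj Ax_0$, i.e. $x_0\in\mathcal{O}_{T,A}\cap M_T$.

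The main obstacle is the passage to the limit: one needs weak sequential compactness of $B_\XB$ (Eberlein--\v{S}mulian, as $\XB$ is only assumed reflexive, not separable), the fact that compact operators carry weakly convergent sequences to norm-convergent ones (this, not mere boundedness of $T$ and $A$, is what turns $y_n\rightharpoonup x_0$ into $(T+\tfrac1n A)y_n\to Tx_0$ and pins the weak limit $y$ inside the two-point / circle set $M_T$), and a weak-$*$ limit of the norming functionals $g_n$. A secondary nuisance is the bookkeeping of scalars -- signs in the real case, phases in the complex case -- needed to reduce to a single positive witness $r$; handling this uniformly is exactly what makes the proof cover the real and complex cases simultaneously.
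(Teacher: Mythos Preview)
Your proof is correct, but it follows a genuinely different route from the paper. The paper's argument is essentially a two-line reduction to a prior characterization: it invokes Theorem~2.3 of \cite{PSMK} (complex case) and Theorem~2.1 of \cite{SPM} (real case), which assert that for compact $T,A$ on a reflexive space, $T\obj A$ implies that for each direction $\alpha$ there exist $x,y\in M_T$ with $Ax\in (Tx)^+_\alpha$ and $Ay\in (Ty)^-_\alpha$; since $M_T$ consists of unimodular multiples of $x_0$, linearity then forces $Tx_0\obj Ax_0$ directly. Your argument instead builds the supporting functional by hand: you perturb $T$ to $T+\tfrac{1}{n}A$, pick norm-attaining vectors $y_n$ and norming functionals $g_n$, and use reflexivity and compactness to pass to limits $x_0$ and $g$, obtaining a functional with $g(Tx_0)=\|Tx_0\|$ and $\mathrm{Re}\,g(Ax_0)\geq 0$, which contradicts the assumed failure of $Tx_0\obj Ax_0$ along the normalized bad direction. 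The paper's approach is much shorter but outsources all the analytic work to \cite{PSMK} and \cite{SPM}; your approach is self-contained and in effect rederives the special case of those theorems that is needed here. Your reduction of $\Y$ to a separable space (so that sequential Banach--Alaoglu applies to the $g_n$) and your phase normalization $A\mapsto e^{i\phi}A$ to make the witnessing scalar positive real are both sound and are exactly the kind of bookkeeping one must do when avoiding the cited black boxes.
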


\begin{proof}
	The sufficient part of the theorem is trivially true. Let us prove only the necessary part. We will give the proof only for the complex case. The real case can be treated similarly, by applying Theorem $ 2.1 $ of \cite{SPM}. Since $ \XB $ is reflexive, $ T,A \in \mathcal{K}(\XB, \Y), $ and $ T\obj A, $ it follows from Theorem $ 2.3 $ of \cite{PSMK} that given any $ \alpha \in U = \{ \beta \in \mathbb{C} : | \beta | = 1, \arg \beta \in [0, \pi) \}, $ there exist $ x=x(\alpha),~ y=y(\alpha) \in M_T $ such that $ Ax \in (Tx)_{\alpha}^{+} = \{ z \in \mathbb{X} : \| Tx+\lambda z \| \geq \| Tx \|~ \forall \lambda = t \alpha, t \geq 0 \} $ and $ Ay \in (Ty)_{\alpha}^{-} = \{ z \in \mathbb{X} : \| Ty+\lambda z \| \geq \| Ty \|~ \forall \lambda = t \alpha, t \leq 0 \}. $ Since $ M_T = \{ e^{i\theta}x_0 : \theta \in [0, 2\pi) \}, $ we have that $ x= e^{i\theta_{1}}x_0 $ and $ y= e^{i\theta_{2}}x_0, $ for some $ \theta_1, \theta_2 \in [0, 2\pi). $ From this, using the linearity of $ T, $ it is easy to deduce that $ Tx_0 \obj Ax_0. $ In particular, it follows that $\mathcal{O}_{T, A}\cap M_T\neq \emptyset$. This completes the proof of the necessary part of the theorem and establishes it completely.
\end{proof}

Let us now study the set $ \mathcal{O}_{T, A}, $ when $ T,A \in \bb(\XB, \Y) $ are given. As an immediate application of the set $ \mathcal{O}_{T, A}, $ in the following proposition, we obtain an easy sufficient condition for Birkhoff-James orthogonality of two bounded linear operators $ T,A $ in terms of the set $ \mathcal{O}_{T, A}. $ The proof of the proposition is omitted as it is rather trivial. We would like to note that the following proposition implies that unless $ T\obj A, $ $ \mathcal{O}_{T, A} $ cannot be the whole of $ S_{\XB}. $ 

\begin{proposition}
	Let $\XB, \Y$ be any two Banach spaces, either both real, or, both complex. Let $T, A \in \bb(\XB, \Y)$. If $\mathcal{O}_{T, A}=S_{\XB}$ then $T\obj A$.
	
\end{proposition}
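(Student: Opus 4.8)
The plan is to lift the family of pointwise Birkhoff--James orthogonality relations $Tx \obj Ax$, $x \in S_{\XB}$, to the operator level by taking a supremum over the unit sphere; this is the natural route from a pointwise statement to an operator-norm inequality, and it needs nothing beyond the definitions.

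Concretely, I would first fix an arbitrary scalar $\lambda \in \mathbb{K}$ and an arbitrary unit vector $x \in S_{\XB}$. Since $\mathcal{O}_{T, A} = S_{\XB}$, we have $x \in \mathcal{O}_{T, A}$, i.e. $Tx \obj Ax$ in $\Y$. By Definition \ref{defiBJ} this means $\|Tx\| \leq \|Tx + \mu Ax\|$ for every $\mu \in \mathbb{K}$; specializing to $\mu = \lambda$ gives
$$\|Tx\| \leq \|Tx + \lambda Ax\| = \|(T+\lambda A)x\|.$$
Next I would take the supremum of both sides over all $x \in S_{\XB}$: the right-hand supremum equals $\|T + \lambda A\|$ and the left-hand supremum equals $\|T\|$, by definition of the operator norm. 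Hence $\|T\| \leq \|T + \lambda A\|$.

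Since $\lambda \in \mathbb{K}$ was arbitrary, Definition \ref{defiBJ} gives $T \obj A$, which is the desired conclusion. There is no genuine obstacle here; the only mild point is to fix $\lambda$ \emph{before} taking the supremum over $x$, so that the inequality $\|Tx\| \leq \|(T+\lambda A)x\|$ holds with the same $\lambda$ uniformly in $x$ — which is immediate from the order of quantifiers. This is presumably why the authors describe the proof as rather trivial.
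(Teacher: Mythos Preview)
Your argument is correct and is precisely the trivial verification the paper has in mind; indeed, the paper omits the proof altogether, noting it is ``rather trivial,'' and your supremum-over-the-unit-sphere computation is exactly the intended route.
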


On the other hand, in somewhat opposite direction to the above result, we next obtain a necessary condition for Birkhoff-James orthogonality of two compact linear operators $ T, A \in  \mathcal{K}(\XB, \Y), $ in terms of the set  $\mathcal{O}_{T, A},$ when $ \XB $ is a reflexive real Banach space and $ \Y $ is any real Banach space. We would like to remark that the following theorem is motivated by Theorem $ 2.1 $ of \cite{sain_paul}, with suitable modifications. Therefore, for the sake of brevity, we make use of some of the arguments used in the proof of  Theorem $ 2.1 $ of \cite{sain_paul}.

\begin{theorem}
	Let $\XB$ be a reflexive real Banach space and $ \Y $ be any real Banach space. Let $T, A \in \mathcal{K}(\XB,\Y). $ If $T\obj A$ then $\mathcal{O}_{T, A} \neq \emptyset$.
\end{theorem}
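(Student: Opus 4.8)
The plan is to extract two ``one--sided'' unit vectors from the hypothesis $T\obj A$ and then join them by a path on $S_\XB$ along which a connectedness argument produces a point of $\mathcal{O}_{T,A}$. First, if $T=0$ there is nothing to prove, since $0\obj Ax$ for every $x$ and hence $\mathcal{O}_{T,A}=S_\XB$; so assume $\|T\|>0$. Since $\XB$ is reflexive and $T,A$ are compact, the characterization of Birkhoff--James orthogonality of compact operators between a reflexive real Banach space and a real Banach space (Theorem $2.1$ of \cite{SPM}; cf.\ also Theorem $2.1$ of \cite{sain_paul}) applied to $T\obj A$ yields $x_1,x_2\in M_T$ with $Ax_1\in(Tx_1)^{+}$ and $Ax_2\in(Tx_2)^{-}$. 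If $x_1=x_2$, then $Ax_1\in(Tx_1)^{+}\cap(Tx_1)^{-}$, which in a real space means exactly $Tx_1\obj Ax_1$, so $x_1\in\mathcal{O}_{T,A}$; and if $x_1=-x_2$, then rewriting $Ax_2\in(Tx_2)^{-}$ via $Tx_2=-Tx_1$, $Ax_2=-Ax_1$ and the symmetry of the norm again gives $Tx_1\obj Ax_1$. So we may assume $x_1\neq\pm x_2$, whence (as $x_1,x_2\in S_\XB$) the segment $[x_1,x_2]$ avoids $0$.

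Next I would set $\gamma(t)=\big\|(1-t)x_1+t x_2\big\|^{-1}\big((1-t)x_1+t x_2\big)$ for $t\in[0,1]$, a continuous curve in $S_\XB$ with $\gamma(0)=x_1$ and $\gamma(1)=x_2$, and consider
$$
C_{+}=\{\, t\in[0,1]:A\gamma(t)\in(T\gamma(t))^{+}\,\},\qquad
C_{-}=\{\, t\in[0,1]:A\gamma(t)\in(T\gamma(t))^{-}\,\}.
$$
Each of these is closed, being an intersection of closed sets: $C_{+}=\bigcap_{\lambda\geq 0}\{\, t:\|T\gamma(t)+\lambda A\gamma(t)\|\geq\|T\gamma(t)\|\,\}$, and $t\mapsto\|T\gamma(t)+\lambda A\gamma(t)\|-\|T\gamma(t)\|$ is continuous (likewise for $C_{-}$). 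They cover $[0,1]$: for any fixed $t$, if $T\gamma(t)=0$ then $t\in C_{+}\cap C_{-}$ trivially, while if $T\gamma(t)\neq 0$ then convexity of $\lambda\mapsto\|T\gamma(t)+\lambda A\gamma(t)\|$ makes its set of minimizers a closed interval, which meets $(-\infty,0]$ or $[0,\infty)$, placing $A\gamma(t)$ in $(T\gamma(t))^{+}$ or $(T\gamma(t))^{-}$ respectively. Finally $0\in C_{+}$ and $1\in C_{-}$ by the choice of $x_1$ and $x_2$.

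To finish, since $[0,1]$ is connected and equals the union of the two nonempty closed sets $C_{+}$ and $C_{-}$, these cannot be disjoint; choosing $t_0\in C_{+}\cap C_{-}$ gives $A\gamma(t_0)\in(T\gamma(t_0))^{+}\cap(T\gamma(t_0))^{-}$, i.e.\ $T\gamma(t_0)\obj A\gamma(t_0)$, so $\gamma(t_0)\in\mathcal{O}_{T,A}$ and the set is nonempty.

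I expect the only genuine difficulty to lie in the first step, namely the appeal to (or, if needed, the reproof of) the reflexive--compact characterization supplying the pair $x_1,x_2\in M_T$ with the one--sided conditions: this is precisely where reflexivity enters (a weak-limit argument applied to a norming sequence for $T+\lambda_n A$ with $\lambda_n\to 0^{\pm}$) and where compactness enters (to upgrade weak convergence to norm convergence, so the limit vector actually lies in $M_T$). After that the interpolation is soft; the only points needing a little care are the degenerate configurations $x_1=\pm x_2$ and the parameters $t$ with $T\gamma(t)=0$, each of which in fact directly exhibits an element of $\mathcal{O}_{T,A}$.
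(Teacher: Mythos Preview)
Your proof is correct and follows essentially the same strategy as the paper's: both invoke Theorem~2.1 of \cite{SPM} to obtain the one--sided vectors $x_1,x_2\in M_T$ and then run a connectedness argument. The only cosmetic difference is that the paper argues by contradiction, decomposing the whole sphere $S_\XB$ into the two disjoint open sets $V_1=\{z\in S_\XB:Az\notin(Tz)^{+}\}$ and $V_2=\{z\in S_\XB:Az\notin(Tz)^{-}\}$ and using connectedness of $S_\XB$, whereas you decompose a path $[0,1]\to S_\XB$ joining $x_1$ to $x_2$ into the complementary closed sets $C_{+},C_{-}$ and use connectedness of the interval.
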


\begin{proof}
	If possible, suppose that $\mathcal{O}_{T, A} = \emptyset.$ Therefore, it follows that given any $ x \in S_{\XB}, $ there exists $ \lambda_x \neq 0 $ such that $ \| Tx + \lambda_x Ax \| < \| Tx \|. $ Let us consider the following two sets:
	\[ V_1 = \{ z \in S_{\XB} ~:~ \|Tz+\lambda Az\| < \| Tz \|~ \mbox{for some}~ \lambda > 0 \}, \]
	\[ V_2 = \{ z \in S_{\XB} ~:~ \|Tz+\lambda Az\| < \| Tz \|~ \mbox{for some}~ \lambda < 0 \}. \]
	It is easy to check that both $ V_1 $ and $ V_2 $ are open subsets of $ S_{\XB}. $ Applying the convexity of norm, it is also easy to check that $ V_1 \cap V_2 = \emptyset. $ Moreover, it follows from our assumption of $\mathcal{O}_{T, A} = \emptyset$ that $ S_{\XB} = V_1 \cup V_2. $ Since $ S_{\XB} $ is connected, it follows that either $ V_1 = \emptyset $ or $ V_2 = \emptyset. $ Now, we will arrive at a contradiction in each of these two cases to complete the proof of the theorem. Since $T, A \in \mathcal{K}(\XB,\Y) $ and $T\obj A,$ it follows from Theorem $ 2.1 $ of \cite{SPM} that there exists $ x,y \in M_T $ such that $ Ax \in (Tx)^{+} $ and $ Ay \in (Ty)^{-}. $ We note that for any $ z \in S_{\XB}, $ $ z \in \mathcal{O}_{T, A} $ if and only if $ Az \in (Tz)^{+} $ and $ Az \in (Tz)^{-}. $ Since we have assumed that $\mathcal{O}_{T, A} = \emptyset,$ we must have, $ x \in V_2 $ and $ y \in V_1. $ This proves that $ V_1 \neq \emptyset $ and $ V_2 \neq \emptyset. $ This contradiction completes the proof of the theorem.
\end{proof}

As another application of the set $\mathcal{O}_{T, A}, $ we show that it is possible to characterize whether a given Hilbert space is finite-dimensional, using this concept.

\begin{theorem}
	A real or complex Hilbert space $\h$ is finite-dimensional if and only if for any $T, A \in \mathcal{B}(\h),$ we have, $T\obj A  \Longrightarrow \mathcal{O}_{T, A}\neq \emptyset$.
\end{theorem}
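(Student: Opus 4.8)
The plan is to handle the two implications separately. The forward implication is immediate from the finite-dimensional Bhatia-$\breve{S}$emrl theorem recalled in the introduction; the reverse implication I would prove by contraposition, constructing a concrete counterexample on $\ell^{2}$.

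\textbf{The easy direction.} Suppose $\h$ is finite-dimensional and let $T, A \in \bh$ with $T \obj A$. The Bhatia-$\breve{S}$emrl characterization gives $\mathcal{O}_{T, A} \cap M_T \neq \emptyset$, and in particular $\mathcal{O}_{T, A} \neq \emptyset$. Thus the implication ``$T \obj A \Rightarrow \mathcal{O}_{T, A} \neq \emptyset$'' holds for every pair $T, A \in \bh$.

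\textbf{The converse, by contraposition.} Assume $\h$ is infinite-dimensional; being separable it is isometrically isomorphic to $\ell^{2}$, so fix an orthonormal basis $\{e_n\}_{n \geq 1}$. I would exhibit $T, A \in \bh$ with $T \obj A$ but $\mathcal{O}_{T, A} = \emptyset$, which negates the displayed implication. Take the diagonal operators $T e_n = \tfrac{n}{n+1} e_n$ and $A e_n = \tfrac{1}{n} e_n$; then $\|T\| = 1$ and this norm is attained at no unit vector. For orthogonality: for any scalar $\lambda$ the operator $T + \lambda A$ is diagonal with entries $c_n = \tfrac{n}{n+1} + \tfrac{\lambda}{n}$, so $\|T + \lambda A\| = \sup_n |c_n| \geq \lim_n |c_n| = 1 = \|T\|$, whence $T \obj A$. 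For emptiness of $\mathcal{O}_{T,A}$: in a Hilbert space Birkhoff--James orthogonality of vectors is ordinary inner-product orthogonality, so $x \in \mathcal{O}_{T, A}$ iff $\langle Tx, Ax \rangle = \langle A^*Tx, x\rangle = 0$; but $A^*T = AT$ is the diagonal operator $e_n \mapsto \tfrac{1}{n+1} e_n$, so $\langle A^*Tx, x\rangle = \sum_n \tfrac{1}{n+1}|\langle x, e_n\rangle|^2 > 0$ for every $x \neq 0$. Hence $\mathcal{O}_{T, A} = \emptyset$, which completes the contrapositive, and with it the proof.

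\textbf{Main obstacle.} The only real content is locating the pair $T, A$ in the infinite-dimensional case. The tension to resolve is that $T \obj A$ forces the perturbation $A$ to vanish asymptotically along a norming sequence for $T$ (here along $e_n$), while $\mathcal{O}_{T,A} = \emptyset$ demands that the quadratic form $x \mapsto \langle A^*Tx, x\rangle$ never vanish on $S_\h$. Both can hold precisely because in infinite dimensions the positive operator $A^*T$ can have trivial kernel and yet $0$ in the closure of its numerical range but not in the numerical range itself; once operators with this feature are chosen, all the verifications above are routine.
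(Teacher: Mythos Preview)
Your proof is correct and follows essentially the same approach as the paper's. Both directions match: the forward implication is dispatched via Bhatia--\v{S}emrl, and the converse is handled by contraposition using nearly identical diagonal operators on an orthonormal basis (the paper takes $Te_1=\tfrac{1}{2}e_1$, $Te_n=(1-\tfrac{1}{n})e_n$ for $n\geq 2$ and the same $A$; your $Te_n=\tfrac{n}{n+1}e_n$ is the same sequence up to an index shift). Your write-up actually spells out the three verifications $\|T\|=1$, $T\obj A$, and $\mathcal{O}_{T,A}=\emptyset$ more explicitly than the paper, which simply states that ``an easy computation reveals'' them.
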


\begin{proof}
	We would like to note that the necessary part of the theorem follows directly from the Bhatia-$\breve{S}$emrl theorem. Let us prove the sufficient part. If possible, suppose that $\h$ is infinite-dimensional. It follows that there exists a countable orthonormal basis $\{e_n: n\in \N\}$ of $\h.$ Define linear operators $T$ and $A$ in $\bh$ in the following way:\\
	$Te_1=\frac12 e_1, Te_n=(1-\frac1n)e_n$ for all $n\geq 2$ and $Ae_n=\frac 1n e_n$. An easy computation reveals that the following are true:
	\[ (i)~ \|T\|=\|A\|=1,~ (ii)~ T\obj A,~ \textit{and}~ (iii)~ \mathcal{O}_{T, A}=\emptyset. \]
	
	However, this contradicts our assumption that $T\obj A  \Rightarrow \mathcal{O}_{T, A}\neq \emptyset.$ This completes the proof of the theorem.
\end{proof}

\begin{remark}
	Characterization of inner product spaces among normed spaces is a classical problem in functional analysis. We refer the readers to the excellent book \cite{amir} for more information in this regard. In recent times, in connection with the  Bhatia-$\breve{S}$emrl theorem,  Ben{\'{\i}}tez, Fern{\'a}ndez and Soriano \cite{benitez-fernandez-soriano} have obtained a characterization of finite-dimensional real Hilbert spaces among real Banach spaces. The above theorem is motivated in the same spirit, and it is valid for both real and complex Banach spaces. 
\end{remark}

From Theorem 2.2 of Sain and Paul \cite{sain_paul}, it follows that in case of $T \in \bh,$ where $ \h $ is a real or complex Hilbert space, the corresponding norm attainment set $M_T$ is either empty or it is the unit sphere of some subspace of $\h.$ Motivated by this result, it is natural to pose the following problem:\\

\emph{For which operators $ T,A \in \bh, $ it is true that $ \mathcal{O}_{T,A} $ is the unit sphere of some subspace of $ \h? $}\\

In the next proposition, we give a sufficient condition for $ T,A \in \bh $ to be such that $ \mathcal{O}_{T,A} $ is the unit sphere of some subspace of $ \h. $

\begin{proposition}
	Let $ \h $ be a real or complex Hilbert space. Let us consider the following set:
	\begin{equation*}
	\begin{split}
	\Gamma=\{(T,A) \in \bh \times \bh : \langle Tx_1, Ax_2\rangle+\langle Tx_2, Ax_1\rangle=0\\
	\text{if}~ \langle Tx_1, Ax_1\rangle=\langle Tx_2, Ax_2\rangle=0\}
	\end{split}
	\end{equation*}

	Then for any $(T,A) \in \Gamma,$ either $ \mathcal{O}_{T,A} = \emptyset $ or  $\mathcal{O}_{T, A}=S_M,$ where $M$ is a subspace of $\h$. 
\end{proposition}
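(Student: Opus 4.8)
The plan is to pass from Birkhoff--James orthogonality of the vectors $Tx$ and $Ax$ to the vanishing of an inner product, and then to recognize that the defining relation of $\Gamma$ is exactly the polarization-type identity that forces the corresponding zero set to be linear.

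First I would use the well-known fact that in a Hilbert space Birkhoff--James orthogonality coincides with inner product orthogonality: for $u, v \in \h$ one has $u \perp_B v$ if and only if $\langle u, v \rangle = 0$. Hence, for $x \in S_{\h}$, the membership $x \in \mathcal{O}_{T, A}$ is equivalent to $\langle Tx, Ax \rangle = 0$. Setting
\[
M = \{ x \in \h : \langle Tx, Ax \rangle = 0 \},
\]
we obtain $\mathcal{O}_{T, A} = M \cap S_{\h}$, so it suffices to prove that $M$ is a closed linear subspace of $\h$; once this is done, $\mathcal{O}_{T, A} = S_M$, which is empty exactly when $M = \{0\}$, giving the stated dichotomy.

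Next I would verify that $M$ is a closed subspace. Closedness is immediate, since $M$ is the preimage of $\{0\}$ under the continuous map $x \mapsto \langle Tx, Ax \rangle$ (continuity following from boundedness of $T$ and $A$). Stability under scalar multiplication follows from $\langle T(\lambda x), A(\lambda x) \rangle = |\lambda|^2 \langle Tx, Ax \rangle$. The only substantive step is stability under addition: for $x_1, x_2 \in M$, expanding by (bi/sesqui-)linearity of $T$, $A$ and the inner product gives
\[
\langle T(x_1 + x_2), A(x_1 + x_2) \rangle = \langle Tx_1, Ax_1 \rangle + \langle Tx_1, Ax_2 \rangle + \langle Tx_2, Ax_1 \rangle + \langle Tx_2, Ax_2 \rangle,
\]
where the first and last terms vanish because $x_1, x_2 \in M$, and the middle two sum to zero precisely because $(T, A) \in \Gamma$ and $\langle Tx_1, Ax_1 \rangle = \langle Tx_2, Ax_2 \rangle = 0$. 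Therefore $x_1 + x_2 \in M$, and the proof concludes.

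I do not expect a genuine obstacle here: the entire content is the translation to the quadratic form $x \mapsto \langle Tx, Ax \rangle$ and the observation that the hypothesis $(T,A) \in \Gamma$ is tailor-made to close this form's zero set under addition. The one point deserving a moment's care is the complex case, where one must note that the cross terms produced in the expansion of $\langle T(x_1+x_2), A(x_1+x_2)\rangle$ are genuinely $\langle Tx_1, Ax_2 \rangle + \langle Tx_2, Ax_1 \rangle$ (regardless of the convention for which slot of the inner product is conjugate-linear), so that the relation defining $\Gamma$ applies verbatim, and to recall that the identification of $\perp_B$ with inner-product orthogonality remains valid over $\mathbb{C}$.
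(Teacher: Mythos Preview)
Your proof is correct and follows essentially the same approach as the paper: both reduce to showing that the set $M=\{x\in\h:\langle Tx,Ax\rangle=0\}$ is a subspace, verify scalar homogeneity via the $|\lambda|^2$ factor, and then use the defining condition of $\Gamma$ to handle closure under addition via the expansion of $\langle T(x_1+x_2),A(x_1+x_2)\rangle$. You add a little extra care (the explicit identification of $\perp_B$ with inner-product orthogonality, closedness of $M$, and the remark about the complex case), but the argument is the same in substance.
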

\begin{proof}
	It is enough to prove that if $x_1, x_2$ satisfy $\langle Tx_1, Ax_1\rangle=\langle Tx_2, Ax_2\rangle=0$ then $\langle T(x_1+x_2), A(x_1+x_2)\rangle=0$ and $\langle T\lambda x_1, A\lambda x_1\rangle=0$ for all $\lambda \in \C$. We observe that the second condition is trivially true. On the other hand, the first condition holds true since by the hypothesis, we have,\\ 
	$\langle T(x_1+x_2), A(x_1+x_2)\rangle=(\langle Tx_1, Ax_1\rangle+\langle Tx_2, Ax_2\rangle)+(\langle Tx_1, Ax_2\rangle+\langle Tx_2, Ax_1\rangle)=0.$ This completes the proof of the proposition.
\end{proof}

\begin{remark}
	It is trivial that if $T, A \in \bh$ have disjoint support then $(T,A)\in \Gamma$. However, it is interesting to note that $ \Gamma $ contains pairs of operators that do not have disjoint support. Let $M, N$ be finite-dimensional subspaces of $\h$ such that $M\subsetneq N$. We consider $P_M$ and $P_N$ to be the orthogonal projections on $M$ and $N$  respectively. By the hypothesis, we have that $P_MP_N=P_NP_M=P_M$. Let us choose $0\neq x\in M^{\perp}\cap N$ with $\|x\|=1.$ It is easy to see that $ P_N \obj P_M, $  since  $x\in M_{P_N}$ and $\langle P_Nx, P_Mx\rangle=\langle P_Mx, x\rangle=0$. On the other hand, $0=\langle P_Ny, P_My\rangle=\langle P_My, y\rangle=\|P_My\|^2$ if and only if $y\in M^{\perp}$. Let $x_1, x_2 \in M^{\perp}$, then $\langle P_Nx_1, P_Mx_2\rangle +\langle P_Nx_2, P_Mx_1\rangle= \langle P_Mx_1, x_2\rangle +\langle P_Mx_2, x_1\rangle=0$. Therefore, we have proved that the following three statements hold true:
	\[(i)~(P_N,P_M) \in \Gamma~ (ii)~P_N\obj P_M~\textit{and}~(iii)~ P_N, P_M ~ \textit{do not have disjoint support}. \]
	
\end{remark}

\section{Orthogonality in $\bh$}
We begin this section by proving that in the context of bounded linear operators on a Hilbert space, disjoint support implies both Birkhoff-James orthogonality and isosceles orthogonality.
\begin{proposition}\label{soporte implica bj e i}
	Let $A,B\in \bh$, where $\cH$ is a real or  complex Hilbert space,  such that $B^*A=0$ , then the following holds:
	\be
	\item $A\obj B\ {\rm and}\ B\obj A$.
	\item $A\perp_{R}B$ and in particular, $A\oi B$.
	\ee
\end{proposition}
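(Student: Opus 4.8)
The plan is to reduce the whole proposition to a single elementary observation: the hypothesis $B^*A = 0$ says exactly that the ranges of $A$ and $B$ are mutually orthogonal subspaces of $\h$. Indeed, $B^*A = 0$ holds if and only if $\langle B^*Ax, y\rangle = 0$ for all $x, y \in \h$, i.e. $\langle Ax, By\rangle = 0$ for all $x, y \in \h$; in particular $\langle Ax, Bx\rangle = 0$ and $\langle Bx, Ax\rangle = 0$ for every $x \in \h$ (equivalently, note that $A^*B = (B^*A)^* = 0$ as well). Everything then follows from the pointwise Pythagorean identity
\[
\|(A + \lambda B)x\|^2 = \|Ax\|^2 + |\lambda|^2\,\|Bx\|^2 \qquad (x \in \h,\ \lambda \in \mathbb{K}),
\]
which is immediate from expanding the inner product and using $\langle Ax, Bx\rangle = 0$.

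For part (1), fix $\lambda \in \mathbb{K}$. The displayed identity gives $\|(A+\lambda B)x\| \ge \|Ax\|$ for every $x \in \h$, hence $\|A + \lambda B\| = \sup_{x \in S_\h}\|(A+\lambda B)x\| \ge \|A\|$; since $\lambda$ was arbitrary, $A \obj B$. Swapping the roles of $A$ and $B$ (the identity is symmetric under this swap, since $\langle Bx, Ax\rangle = 0$ too) yields $\|B + \lambda A\| \ge \|B\|$ for all $\lambda$, that is, $B \obj A$.

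For part (2), the same identity, combined with $|-\lambda| = |\lambda|$, shows that $\|(A + \lambda B)x\| = \|(A - \lambda B)x\|$ for every $x \in \h$ and every $\lambda \in \mathbb{K}$; taking suprema over $S_\h$ gives $\|A + \lambda B\| = \|A - \lambda B\|$ for all $\lambda$, which is precisely $A \perp_{R} B$. The relation $A \oi B$ is then the special case $\lambda = 1$ in the real setting, and the special cases $\lambda = 1$ and $\lambda = i$ in the complex setting (cf. \eqref{defiiso2}); alternatively, it follows from the already noted implication that Roberts orthogonality is stronger than both Birkhoff-James and isosceles orthogonality.

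I do not anticipate any real obstacle here: once $B^*A = 0$ is translated into range-orthogonality, the proposition is a vector-by-vector application of the Pythagorean theorem. The only mild point of care is the complex case of isosceles orthogonality, where two equalities must be verified, but both are instances of the Roberts orthogonality relation already established.
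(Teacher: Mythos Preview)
Your proof is correct and follows essentially the same route as the paper: both arguments expand $\|(A+\lambda B)x\|^2$, use $B^*A=0$ to kill the cross terms and obtain the Pythagorean identity $\|(A+\lambda B)x\|^2=\|Ax\|^2+|\lambda|^2\|Bx\|^2$, and then read off Birkhoff--James and Roberts orthogonality by taking suprema over $S_\h$. Your extra remark that $B^*A=0$ is equivalent to $R(A)\perp R(B)$ is a helpful framing but not a different method.
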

\begin{proof}
	\be
	\item Consider $h\in S_{\h}$. Then for any  $\lambda\in \mathbb{K}$ 
	$$\|(A+\lambda B)h\|^2=\|Ah\|^2+\|\lambda Bh\|^2+2|\lambda|^2{\rm Re}\left\langle B^*Ah,h \right\rangle= \|Ah\|^2+\|\lambda Bh\|^2,$$
	where $Re(z)$ denotes the usual real part of $z\in  \mathbb{K}$. Therefore, $\|A+\lambda B\|^2\geq  \|(A+\lambda B)h\|^2\geq \|Ah\|^2$ for all $h\in S_\h$, which implies that $ \|A+\lambda B\|\geq \|A\|$ for any $\lambda \in \mathbb{K}$. Interchanging the roles of $A$ and $B,$ we can obtain in a similar way that $B\obj A$.
	\item If $A,B\in\bh$ satisfy $B^*A=0,$ then for any $\lambda \in \mathbb{K},$ we have,
	\begin{equation}\label{roberts}
	\begin{split}
	\|A+\lambda B\|^2&=\sup\{\|(A+\lambda B)h\|^2:\ h\in S_\h \}\\
	&=\sup\{\|Ah\|^2+\|\lambda Bh\|^2:\ h\in S_\h \}=\|A-\lambda B\|^2.
	\end{split}
	\end{equation}
	\ee
	This completes the proof of the second part of the proposition and establishes it completely.
\end{proof}

\begin{remark}
	\be
	
	\item In particular, it follows from our previous result that for operators having disjoint support, Birkhoff-James orthogonality relation is symmetric. 
	
	\item From inequality \eqref{roberts},  we can conclude that  if  $R(A)$ and $R(B)$ are orthogonal sets then they are  Roberts orthogonal operators. We will prove  that a similar result holds when we consider any $|||\cdot|||$ UIN. 
	
	Let  $A,B\in \mathcal{J},$ where  $\mathcal{J}$ denotes the norm ideal associated with the norm,  such that $B^*A=0$. It follows that
	$|B + \lambda A| = |B - \lambda A|$ for all $\lambda\in\mathbb{K}$ and therefore it turns out that	$s_j(B+\lambda A)=s_j(B-\lambda A)$ for any $j \in \mathbb{N}$ and $|||B+\lambda A|||=|||B-\lambda A|||$.	\ee 
	
\end{remark}

However, not every pair of operators $A,B\in \bh,$ such that $A\obj B$ or $A\oi B,$ have disjoint support. This idea can be illustrated in the next example.
\begin{example} \label{ejemplo iso no sop dis}
	\be
	\item Finite dimensional case: Let $ \h $ be the two-dimensional real Hilbert space. We consider 
	Let $A=\begin{pmatrix}
	4&0\\
	0&3
	\end{pmatrix}$ and $B=\begin{pmatrix}
	0&0\\
	0&1
	\end{pmatrix}$. Then, 
	\be
	\item $\|A+B\|=\|A-B\|=4$
	and
	
	\item $A^*B=\begin{pmatrix}
	0&0\\
	0&3
	\end{pmatrix},$
	\ee 
	which implies that $ A,B $ do not have disjoint support.
	
	\item Infinite dimensional case: Let $\{e_n\}_{n\in\N}$ be an orthonormal basis for $\h$ a complex Hilbert space. Define operators $A,B:\h\to\h$ such that
	$$Ae_n=\lambda_ne_n\ \ {\text and}\ \ Be_1=0,\ Be_n=\lambda_ne_{n+1}\ \forall\ n\geq 2,$$
	with $\lambda_n\in \C$ and $|\lambda_n|=1$. Observe that $\|Ax\|=\|x\|$ for all $x\in \h$.
	
	Since $\|Ae_1\|=\|e_1\|=1=\|A\|$ and $Be_1=0$, it follows that $A\obj B$. On the other hand, $A$ and $B$ do not have disjoint support, since 
	$$A^*Be_n=A^*(\lambda_ne_{n+1})=\lambda_n(A^*e_{n+1})=\lambda_n\overline{\lambda_{n+1}}e_{n+1}\neq 0.$$
	\ee
	
\end{example}

Hereafter, unless otherwise mentioned, we consider $\cH$ to be a real Hilbert space.

\begin{theorem} \label{teo isosceles1}
	Let $A,B\in \bh$ and suppose that there exists $h_0,k_0\in \h$  such that $h_0\in M_{A+B} $ and
	$k_0\in M_{A-B} $. Then the following assertions are true.
	\be
	\item If $\left\langle Ah_0,Bh_0 \right\rangle\leq 0$ and $\left\langle Ak_0,Bk_0 \right\rangle\geq 0$,
	then $A\oi B$.
	\item If $A\oi B$ then $\left\langle Ah_0,Bh_0 \right\rangle\geq 0$ and $\left\langle Ak_0,Bk_0 \right\rangle\leq 0$.
	\ee
\end{theorem}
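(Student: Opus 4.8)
The plan is to reduce everything to the elementary expansion, valid for a real Hilbert space and any unit vector $x$,
\[
\|(A\pm B)x\|^2 = \|Ax\|^2 \pm 2\langle Ax, Bx\rangle + \|Bx\|^2,
\]
so that $\|(A+B)x\|^2 - \|(A-B)x\|^2 = 4\langle Ax, Bx\rangle$. I would then evaluate this at the norming vectors $h_0 \in M_{A+B}$ and $k_0 \in M_{A-B}$; note that by definition of the norm attainment set these already satisfy $\|h_0\| = \|k_0\| = 1$, so no normalization is needed. The point is that $\|(A+B)h_0\| = \|A+B\|$ and $\|(A-B)k_0\| = \|A-B\|$, while the "cross" evaluations $\|(A-B)h_0\|$ and $\|(A+B)k_0\|$ are always bounded above by $\|A-B\|$ and $\|A+B\|$ respectively. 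Chaining these with the sign information on the inner products is the whole content of the argument.

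For part (1), suppose $\langle Ah_0, Bh_0\rangle \le 0$. From the displayed identity at $h_0$, $\|(A-B)h_0\|^2 = \|(A+B)h_0\|^2 - 4\langle Ah_0,Bh_0\rangle \ge \|(A+B)h_0\|^2 = \|A+B\|^2$, hence $\|A-B\| \ge \|(A-B)h_0\| \ge \|A+B\|$. Symmetrically, using $\langle Ak_0, Bk_0\rangle \ge 0$ and the identity at $k_0$, $\|(A+B)k_0\|^2 = \|(A-B)k_0\|^2 + 4\langle Ak_0,Bk_0\rangle \ge \|A-B\|^2$, so $\|A+B\| \ge \|(A+B)k_0\| \ge \|A-B\|$. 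Combining the two inequalities gives $\|A+B\| = \|A-B\|$, i.e. $A \oi B$.

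For part (2), assume $A \oi B$, i.e. $\|A+B\| = \|A-B\|$. Then at $h_0$ we have $\|(A-B)h_0\|^2 = \|A+B\|^2 - 4\langle Ah_0, Bh_0\rangle$, and since $\|(A-B)h_0\| \le \|A-B\| = \|A+B\|$, this forces $-4\langle Ah_0, Bh_0\rangle \le 0$, i.e. $\langle Ah_0, Bh_0\rangle \ge 0$. Likewise, at $k_0$, $\|(A+B)k_0\|^2 = \|A-B\|^2 + 4\langle Ak_0, Bk_0\rangle$ together with $\|(A+B)k_0\| \le \|A+B\| = \|A-B\|$ forces $\langle Ak_0, Bk_0\rangle \le 0$. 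This establishes the theorem.

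I do not anticipate a genuine obstacle here: the argument is a short bookkeeping exercise once the polarization identity is written down. The only thing to be careful about is the direction of each inequality — which of $\|A\pm B\|$ plays the role of the attained value and which is merely an upper bound at the "wrong" vector — and to record explicitly that $h_0, k_0$ are unit vectors so that "$\|(A+B)h_0\| = \|A+B\|$" is literally an equality rather than an inequality.
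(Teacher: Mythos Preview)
Your proof is correct and follows essentially the same approach as the paper: both hinge on the expansion $\|(A\pm B)x\|^2=\|Ax\|^2\pm 2\langle Ax,Bx\rangle+\|Bx\|^2$ evaluated at the norming vectors, combined with the trivial bounds $\|(A\mp B)h_0\|\le\|A\mp B\|$. The only cosmetic difference is that the paper phrases part~(1) as a proof by contradiction while you argue directly, and part~(2) is identical in substance.
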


\begin{proof}
	\be
	\item Assume that all the conditions of the statement are satisfied. Let $f,g:\h\to\R$ be given by
	\begin{eqnarray}\label{f y g}
	f(h)&=&\|(A+B)h\|^2=\|Ah\|^2+\|Bh\|^2+2\left\langle Ah,Bh \right\rangle\ {\rm and}\nonumber\\
	g(h)&=&\|(A-B)h\|^2=\|Ah\|^2+\|Bh\|^2-2 \left\langle Ah,Bh \right\rangle.\
	\end{eqnarray}
	Then,  $f(h)-g(h)=4  \left\langle Ah,Bh \right\rangle\ $. Suppose that 
	$$g(k_0)=\|A-B\|^2<\|A+B\|^2=f(h_0)\Rightarrow g(h_0)\leq g(k_0)<f(h_0).$$
	Thus, $0<f(h_0)-g(h_0)=4\left\langle Ah_0,Bh_0 \right\rangle$, which is a contradiction. Hence, $g(k_0)\geq f(h_0)$. Analogously, it can be proved that $f(h_0)\geq g(k_0)$. Finally, 
	$$\|A+B\|^2=f(h_0)=g(k_0)=\|A-B\|^2,$$
	which implies $A\oi B$.
	\item We only prove the first inequality, the other can be obtained with a similar argument. By the real polarization formula we get
	$$
	\left\langle Ah_0,Bh_0 \right\rangle=\frac14 [\|(A+B)h_0\|^2-\|(A-B)h_0 \| ^2]\geq\frac14 [ \|A+B\|^2-\|A-B\|^2]=0.
	$$
	\ee
\end{proof}
\begin{remark}
	Suppose that in Theorem \ref{teo isosceles1},  $h_0$ and $k_0$ also satisfy
	$$\left\langle Ah_0,Bh_0 \right\rangle=\left\langle Ak_0,Bk_0 \right\rangle=0.$$
	Then, there exists $h_1\in S_\h$ such that
	$\|(A+B)h_1\|=\|(A-B)h_1\|$. 
	
	It can be easily proved using polarization formula and hypothesis,
	\begin{equation*}
	\begin{split}
	0&=\left\langle Ah_0,Bh_0 \right\rangle\ =\frac{1}{4}\left[\|(A+B)h_0\|^2-\|(A-B)h_0\|^2 \right] 
	\end{split}
	\end{equation*}
	and this implies
	$\|(A-B)h_0\|=\|(A+B)h_0\|=\|(A-B)k_0\|$, where last equality is due to isosceles orthogonality between $A$ and $B$ previously proved. By a similar argument, it can be proved that
	$\|(A+B)k_0\|=\|(A+B)h_0\|.$
	The proof is completed by taking $h_1\in \{h_0;k_0\}$.
	
\end{remark}
The following result combines  Theorem \ref{teo isosceles1} and last remark.  
\begin{corollary} \label{isosceles implica desigualdad}
	Let $A,B\in \bh$ and suppose that there exists $h_1\in M_{A+B}\cap M_{A-B}$ such that
	$\left\langle Ah_1,Bh_1 \right\rangle=0$. Then $A\oi B$,
	$$\|A\|^2+\|B\|^2\leq \|A+B\|^2+\|A-B\|^2\leq 2\left( \|A\|^2+\|B\|^2\right),$$
	and if $h_1\notin N(A)\cup N(B)$ then $\|A+B\|^2=\|A-B\|^2=2$.
\end{corollary}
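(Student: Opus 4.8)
The plan is to read the three conclusions off Theorem~\ref{teo isosceles1}, the polarization identity, and two elementary norm inequalities. First, to obtain $A\oi B$ I would apply part~(1) of Theorem~\ref{teo isosceles1} with $h_0=k_0=h_1$: the hypothesis $\langle Ah_1,Bh_1\rangle=0$ gives simultaneously $\langle Ah_0,Bh_0\rangle\le 0$ and $\langle Ak_0,Bk_0\rangle\ge 0$, so that theorem applies and yields isosceles orthogonality directly. (This is the same application of Theorem~\ref{teo isosceles1} underlying the remark that precedes the corollary.)

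For the displayed chain, the central observation is that, since $\langle Ah_1,Bh_1\rangle=0$, the real polarization identity collapses to $\|(A+B)h_1\|^2=\|(A-B)h_1\|^2=\|Ah_1\|^2+\|Bh_1\|^2$; because $h_1\in M_{A+B}\cap M_{A-B}$ this is exactly $\|A+B\|^2=\|A-B\|^2=\|Ah_1\|^2+\|Bh_1\|^2$. Using $\|Ah_1\|\le\|A\|$ and $\|Bh_1\|\le\|B\|$ and adding the two equal quantities gives the right-hand inequality $\|A+B\|^2+\|A-B\|^2\le 2(\|A\|^2+\|B\|^2)$. For the left-hand inequality I would write $2A=(A+B)+(A-B)$, so $\|A\|\le\tfrac12(\|A+B\|+\|A-B\|)$ and hence $\|A\|^2\le\tfrac14(\|A+B\|+\|A-B\|)^2\le\tfrac12(\|A+B\|^2+\|A-B\|^2)$ by $(s+t)^2\le 2(s^2+t^2)$; the same argument applied to $2B=(A+B)-(A-B)$ bounds $\|B\|^2$, and adding the two estimates gives $\|A\|^2+\|B\|^2\le\|A+B\|^2+\|A-B\|^2$.

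For the last assertion the identity $\|A+B\|^2=\|A-B\|^2=\|Ah_1\|^2+\|Bh_1\|^2$ from the previous step is already the core, and $h_1\notin N(A)\cup N(B)$ keeps both summands on the right strictly positive; note that the value $2$ presupposes the normalization $\|A\|=\|B\|=1$. To pin the common value down, I would try to show that $h_1$, being a norm-attaining vector of $A+B$ and of $A-B$, is in fact a norm-attaining vector of $A$ and of $B$ as well, i.e. $h_1\in M_A\cap M_B$: as such a vector $h_1$ is a top eigenvector of $(A+B)^*(A+B)$ and of $(A-B)^*(A-B)$, both with eigenvalue $\|Ah_1\|^2+\|Bh_1\|^2$, so averaging shows $h_1$ is a top eigenvector of $A^*A+B^*B$ and $\|A^*A+B^*B\|=\|Ah_1\|^2+\|Bh_1\|^2$; one then wants to deduce $\|Ah_1\|=\|A\|$ and $\|Bh_1\|=\|B\|$, after which $\|A+B\|^2=\|A-B\|^2=\|A\|^2+\|B\|^2=2$ follows. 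I expect this last step to be the main obstacle: the equality $\|A^*A+B^*B\|=\|Ah_1\|^2+\|Bh_1\|^2$ a priori controls only the sum of the two Rayleigh quotients of $A^*A$ and $B^*B$ at $h_1$, not each separately, so the conclusion $h_1\in M_A\cap M_B$ would have to use the hypothesis $h_1\notin N(A)\cup N(B)$ together with the common-maximizer property in a more delicate way than the preceding parts require.
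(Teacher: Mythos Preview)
Your handling of the first two conclusions is correct and mirrors the paper: both deduce $A\oi B$ from Theorem~\ref{teo isosceles1}(1) with $h_0=k_0=h_1$, and both obtain the right-hand inequality from $\|(A\pm B)h_1\|^2=\|Ah_1\|^2+\|Bh_1\|^2\le\|A\|^2+\|B\|^2$. For the left-hand inequality your triangle-inequality route via $2A=(A+B)+(A-B)$, $2B=(A+B)-(A-B)$ and $(s+t)^2\le2(s^2+t^2)$ is a minor variant of the paper's bound $\|A\|^2+\|B\|^2\le2\max(\|A\|^2,\|B\|^2)\le\|A+B\|^2+\|A-B\|^2$; both are one-line estimates and yours has the small advantage of not needing $\|A+B\|=\|A-B\|$.

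On the final assertion your approach and the paper's diverge, and in fact neither can be completed, because the claim as stated is false. Take $\h=\R^2$, $A=\begin{pmatrix}1&0\\0&0\end{pmatrix}$, $B=\begin{pmatrix}0&0\\0&1\end{pmatrix}$, so that $A+B=I$ and $A-B=\begin{pmatrix}1&0\\0&-1\end{pmatrix}$; then $M_{A+B}=M_{A-B}=S_\h$, and for $h_1=\tfrac{1}{\sqrt2}(1,1)$ one has $\langle Ah_1,Bh_1\rangle=0$ and $h_1\notin N(A)\cup N(B)$, yet $\|A\pm B\|^2=1\ne2$. The normalization $\|A\|=\|B\|=1$ you correctly flagged is already satisfied here, so it does not rescue the conclusion. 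The paper argues this part via the displayed identity
\[
\|(A+B)h_1\|^2\|Bh_1\|^2-\|Ah_1\|^2\|Bh_1\|^2=|\langle (A+B)h_1,Bh_1\rangle|^2-|\langle Ah_1,Bh_1\rangle|^2,
\]
but under the hypothesis $\langle Ah_1,Bh_1\rangle=0$ both sides equal $\|Bh_1\|^4$, so the identity is a tautology and the subsequent step ``$\|A+B\|^2=1+\|Ah_1\|^2$'' is unsupported. Your alternative strategy of proving $h_1\in M_A\cap M_B$ cannot succeed either: in the counterexample $h_1$ attains the norm of neither $A$ nor $B$. Your instinct that this is the main obstacle was right; without further hypotheses it is insurmountable.
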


\begin{proof}
	It was proved in Theorem \ref{teo isosceles1} that, under these hypothesis, $A\oi B$. Moreover,
	\begin{equation*}
	\begin{split}
	\|A+B\|^2&=\|(A+B)h_1\|^2=\|Ah_1\|^2+\|Bh_1\|^2\leq \|A\|^2+\|B\|^2\ {\rm and}\\
	\|A-B\|^2&=\|(A-B)h_1\|^2=\|Ah_1\|^2+\|Bh_1\|^2\leq \|A\|^2+\|B\|^2.
	\end{split}
	\end{equation*}
	Then,
	$$\|A+B\|^2+\|A-B\|^2\leq 2\left( \|A\|^2+\|B\|^2\right).$$
	On the other hand,
	$$\|A\|^2+\|B\|^2\leq 2\max(\|A\|^2;\|B\|^2)\leq \|A+B\|^2+\|A-B\|^2,$$
	
	Finally, it is easy to see that
	$$
	\|(A+B)h_1\|^2\|Bh_1\|^2-\|Ah_1\|^2\|Bh_1\|^2=|\langle (A+B)h_1, Bh_1\rangle|^2-|\langle Ah_1, Bh_1\rangle|^2.
	$$
	If we assume that $h_1\notin N(A)\cup N(B)$, then $\|A+B\|^2=1+\|Ah_1\|^2$. By symmetry we obtain that $\|A+B\|^2=1+\|Bh_1\|^2$.
	
	Now, by the Parallelogram law we get 
	$$
	\|A+B\|^2+\|A-B\|^2=\|(A+B)h_1\|^2+\|(A-B)h_1\|^2=2(\|Ah_1\|^2+\|Bh_1\|^2).
	$$
	It follows that $\|Bh_1\|=1$ and $\|A+B\|^2=2$.
\end{proof}

The following result is other  characterization of isosceles orthogonality of bounded linear operators in finite-dimensional real Hilbert spaces, with an additional condition.

\begin{theorem} \label{teo equiv iso}
	Let $\h$ be a finite-dimensional real Hilbert space  with $\dim(\h)=n$ and $A,B\in \bh$. Suppose that $M_{A+B}=S_{\h_1}$,  $M_{A-B}=S_{\h_2}$, and $\dim(\h_1)+\dim(\h_2) > n$. Then, the following statements are equivalent:
	\be
	\item $A\oi B$.
	\item There exists $x_0\in M_{A+B}\cap M_{A-B}$ such that $\left\langle Ax_0,Bx_0 \right\rangle=0$
	\ee
\end{theorem}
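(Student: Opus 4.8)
The plan is to treat the two implications separately, the point being that one of them is already contained in the results proved above. For the direction $(2)\Rightarrow(1)$ there is essentially nothing to prove: if $x_0\in M_{A+B}\cap M_{A-B}$ satisfies $\left\langle Ax_0,Bx_0\right\rangle=0$, then Corollary \ref{isosceles implica desigualdad} applies verbatim and yields $A\oi B$. (Equivalently, one may invoke Theorem \ref{teo isosceles1}(1) with the choice $h_0=k_0=x_0$, since $\left\langle Ax_0,Bx_0\right\rangle=0$ is simultaneously $\leq 0$ and $\geq 0$.)

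The substance of the statement is therefore the direction $(1)\Rightarrow(2)$, and its crucial ingredient is an elementary dimension count. First I would note that $\h_1$ and $\h_2$ are subspaces of the $n$-dimensional space $\h$ with $\dim(\h_1)+\dim(\h_2)>n$, so the Grassmann dimension formula gives
\[
\dim(\h_1\cap\h_2)=\dim(\h_1)+\dim(\h_2)-\dim(\h_1+\h_2)\geq\dim(\h_1)+\dim(\h_2)-n>0,
\]
whence $\h_1\cap\h_2$ is a non-trivial subspace. Choosing any $x_0\in S_\h$ lying in $\h_1\cap\h_2$, we get $x_0\in S_{\h_1}\cap S_{\h_2}=M_{A+B}\cap M_{A-B}$.

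It then remains to check that $\left\langle Ax_0,Bx_0\right\rangle=0$. Since $x_0\in M_{A+B}$ we have $\|(A+B)x_0\|=\|A+B\|$; since $x_0\in M_{A-B}$ we have $\|(A-B)x_0\|=\|A-B\|$; and the hypothesis $A\oi B$ asserts precisely that $\|A+B\|=\|A-B\|$. Combining these, $\|(A+B)x_0\|=\|(A-B)x_0\|$, and the real polarization identity
\[
\left\langle Ax_0,Bx_0\right\rangle=\frac14\left(\|(A+B)x_0\|^2-\|(A-B)x_0\|^2\right)
\]
forces $\left\langle Ax_0,Bx_0\right\rangle=0$, which is exactly statement (2). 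I do not expect a genuine obstacle here; the only place where finite-dimensionality and the hypothesis $\dim(\h_1)+\dim(\h_2)>n$ enter is in guaranteeing that $M_{A+B}\cap M_{A-B}\neq\emptyset$, after which the conclusion is a one-line application of polarization together with the definitions of the norm attainment set and of isosceles orthogonality. If one wished to avoid assuming outright that $M_{A\pm B}$ are unit spheres of subspaces, one could appeal to Theorem $2.2$ of \cite{sain_paul}, which guarantees this for any non-empty norm attainment set in a Hilbert space, non-emptiness being automatic here by compactness of $S_\h$.
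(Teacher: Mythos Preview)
Your proof is correct and follows essentially the same approach as the paper's own argument: for $(1)\Rightarrow(2)$ the paper uses the dimension inequality to produce a nonzero vector in $\h_1\cap\h_2$, normalizes it, and then applies the real polarization identity together with $\|A+B\|=\|A-B\|$; for $(2)\Rightarrow(1)$ it simply cites Corollary \ref{isosceles implica desigualdad}. Your write-up is in fact slightly more detailed (you spell out the Grassmann dimension formula), but there is no substantive difference.
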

\begin{proof}
	
	Suppose that statement $(1)$ holds. Since $\dim(\h_1)+\dim(\h_2) > n$, there exists $0\neq x_1\in \h_1\cap \h_2$. Consider $x_0= \frac{x_1}{\|x_1\|}$, then $x_0\in M_{A+B}\cap M_{A-B}$. Also, by hypothesis and real polarization formula
	$$\left\langle Ax_0,Bx_0 \right\rangle=\frac 1 4 \left(\|(A+B)x_0\|^2-\|(A-B)x_0\|^2 \right) =\frac 1 4 \left(\|A+B\|^2-\|A-B\|^2 \right)=0.$$
	Conversely, the other implication is a consequence of Corollary \ref{isosceles implica desigualdad}.
\end{proof}
\begin{remark}
	If $\h$ is a complex Hilbert space, isosceles orthogonality must be defined as in \eqref{defiiso2} and the previous statements  can be generalized to this context  with  proofs which are essentially the same as the real case.  
	For example, in case of a complex Hilbert space $ \h, $ Theorem \ref{teo equiv iso} can be stated in the following way:
	
	Let $A,B\in \bh$  with $\dim(\h)=n$. Suppose that there exist $\mathcal{W}_1, \mathcal{W}_2, \h_1, \h_2$ subspaces of $\h$ such that
	$$
	\left\lbrace \begin{array}{llll}
	M_{A+B}=S_{\h_1},&  M_{A-B}=S_{\h_2}& \text{and}& \dim(\h_1)+\dim(\h_2) > n,\\
	M_{A+iB}=S_{\mathcal{W}_1},&  M_{A-iB}=S_{\mathcal{W}_2}& \text{and}& \dim(\mathcal{W}_1)+\dim(\mathcal{W}_2) > n.\\
	\end{array}\right. $$
	Then, the following statement are equivalent:
	\be
	\item $A\oi B$.
	\item There exist
	$
	\left\lbrace \begin{array}{llll}
	h_0\in M_{A+B}\cap M_{A-B}&\text{such that}&\text{Re}\left\langle Ah_0,Bh_0 \right\rangle=0,\\
	k_0\in M_{A+iB}\cap M_{A-iB}&\text{such that}&\text{Re}\left\langle Ak_0,Bk_0 \right\rangle=0.\\
	\end{array}\right. $
	\ee
\end{remark}
In the cone of positive
operators between Hilbert spaces, as in a real normed space,  we use isosceles orthogonality notion as in \eqref{defiiso}.
Kittaneh proved in \cite{kittaneh_laa_2004},  that
if $A,B\in\bh^+$, then
\begin{equation}\label{cota kittaneh iso}
\begin{split}
\max(\|A\|,  \|B\|)-\|A^{1/2}B^{1/2}\|&\leq \|A-B\|\\
&\leq \max(\|A\|, \|B\|)\leq \|A+B\|\\
&\leq \max(\|A\|, \|B\|)+\|A^{1/2}B^{1/2}\|.
\end{split}
\end{equation}
The above  inequalities  are useful in the study of isosceles orthogonality in $\bh^+$.  As an immediate consequence, we deduce that if $A,B$ are positve operators  and $A^{1/2}B^{1/2}=0$, then $A\oi B$.

\begin{proposition} \label{iso max}
	Let $A,B\in \bh^+$. Then, the following conditions are equivalent:
	\be
	\item $A\oi B$.
	\item $\|A+B\|=\|A-B\|=\max(\|A\|, \|B\|)$.
	\ee
\end{proposition}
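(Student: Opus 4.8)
The plan is to read off both implications directly from Kittaneh's chain of inequalities \eqref{cota kittaneh iso}, which is valid for every $A,B \in \bh^+$ and which we take as given. Before starting, I would recall explicitly that, since $A,B$ are positive, we are using the real normed-space form \eqref{defiiso} of isosceles orthogonality (as stated just above the proposition), so that $A\oi B$ unambiguously means $\|A+B\| = \|A-B\|$, with no auxiliary condition involving $\|A+iB\|$ entering even when $\h$ is complex.

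The implication $(2)\Rightarrow(1)$ is then immediate: if $\|A+B\| = \|A-B\| = \max(\|A\|,\|B\|)$, then in particular $\|A+B\| = \|A-B\|$, which is precisely $A\oi B$.

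For the converse $(1)\Rightarrow(2)$, I would isolate the middle portion of \eqref{cota kittaneh iso}, namely
$$\|A-B\| \leq \max(\|A\|,\|B\|) \leq \|A+B\|.$$
Assuming $A\oi B$, i.e.\ $\|A+B\| = \|A-B\|$, the quantity $\max(\|A\|,\|B\|)$ is sandwiched between two equal numbers, which forces
$$\|A-B\| = \max(\|A\|,\|B\|) = \|A+B\|,$$
and this is exactly statement $(2)$.

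I do not anticipate any genuine obstacle: all the substantive work has been absorbed into Kittaneh's inequalities, so the proposition reduces to a one-line squeeze argument once those are in hand. The only point deserving a sentence of care is the remark on the definition of $\oi$ for positive operators, to make clear that the equivalence does not secretly hide the second ($i$-)condition of \eqref{defiiso2}.
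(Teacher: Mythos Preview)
Your proposal is correct and follows essentially the same route as the paper: both directions are read off directly from the middle portion $\|A-B\|\leq \max(\|A\|,\|B\|)\leq \|A+B\|$ of Kittaneh's inequalities \eqref{cota kittaneh iso}, with $(1)\Rightarrow(2)$ a squeeze and $(2)\Rightarrow(1)$ trivial. Your explicit remark on using the real form \eqref{defiiso} of $\oi$ for positive operators matches the convention the paper states just before the proposition.
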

\begin{proof}
	$(1)\Rightarrow (2)$ Suppose that $A\oi B$. Equation \eqref{cota kittaneh iso} states 
	$$\|A-B\|\leq \max(\|A\|, \|B\|)\leq \|A+B\|$$
	for any $A,B\in\bh^+$. The desired result is now immediate.
	
	The converse implication is trivial.
\end{proof}

In order to simplify the exposition, we introduce the following notations. Given $A, B \in \bh$ we define
$$
M(A, B)=\left\lbrace \begin{array}{llll}
A &\text{if}& \|B\|\leq \|A\|\\
B &\text{if}& \|A\|< \|B\|\\
\end{array}\right. $$
and 
$$
m(A, B)=\left\lbrace \begin{array}{llll}
B &\text{if}& \|B\|\leq \|A\|\\
A &\text{if}& \|A\|< \|B\|\\
\end{array}\right. .$$

In the next statement we obtain a characterization for isosceles orthogonality when $A$ and $B$ are positive operators.   

\begin{theorem}\label{cariso}
	Let $A,B\in \bh^+$, then  $A\oi B$ if and only if there exists a sequence $\{x_n\} \subset S_{\h}$  such that $\lim\limits_{n\to \infty}\|M(A, B)x_n\|=\|A+B\|$ and $\lim\limits_{n\to \infty}Re\langle BA x_n, x_n\rangle\leq0.$
\end{theorem}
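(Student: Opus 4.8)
The plan is to work with the real-valued functions $f(h)=\|(A+B)h\|^2$ and $g(h)=\|(A-B)h\|^2$ on $S_{\h}$, exactly as in the proof of Theorem \ref{teo isosceles1}, using the identity $f(h)-g(h)=4\langle Ah,Bh\rangle$ and the fact that for positive $A,B$ one has $\langle Ah,Bh\rangle=\langle BAh,h\rangle$ (note $\langle Ah,Bh\rangle=\langle BAh,h\rangle$ since $B=B^*$). Since $A,B\geq 0$, Kittaneh's inequality \eqref{cota kittaneh iso} gives $\|A-B\|\leq \max(\|A\|,\|B\|)\leq \|A+B\|$, and by Proposition \ref{iso max} the condition $A\oi B$ is equivalent to $\|A+B\|=\|A-B\|=\|M(A,B)\|$. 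So throughout I can replace ``$A\oi B$'' by this chain of equalities, and in particular $\|A+B\|=\sup_{h\in S_\h}f(h)$ is then attained in the limit along \emph{any} norming sequence for $M(A,B)$.

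For the forward implication, assume $A\oi B$. Pick any sequence $\{x_n\}\subset S_\h$ with $\|M(A,B)x_n\|\to\|M(A,B)\|$; such a sequence norms $M(A,B)$ and hence, by Proposition \ref{iso max}, $\|(A+B)x_n\|\to\|A+B\|$ and $\|(A-B)x_n\|\to\|A-B\|=\|A+B\|$. Thus $f(x_n)-g(x_n)\to 0$, i.e. $\langle Ax_n,Bx_n\rangle\to 0$, so certainly $\limsup_n Re\langle BAx_n,x_n\rangle\leq 0$; choosing if necessary a subsequence to make the $\liminf$ into a genuine limit gives the required sequence. (One may even arrange equality to $0$, but $\leq 0$ is all that is asked.) For the converse, suppose $\{x_n\}\subset S_\h$ satisfies $\|M(A,B)x_n\|\to\|A+B\|$ and $\lim_n Re\langle BAx_n,x_n\rangle\leq 0$, i.e. $\langle Ax_n,Bx_n\rangle$ has nonpositive limit. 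Then $f(x_n)=\|(A+B)x_n\|^2$. Using $\|(A+B)x_n\|^2=\|Ax_n\|^2+\|Bx_n\|^2+2\langle Ax_n,Bx_n\rangle$ and the analogous formula for $g$, one has $g(x_n)=f(x_n)-4\langle Ax_n,Bx_n\rangle\geq f(x_n)$ in the limit, hence $\|A-B\|^2\geq \liminf_n g(x_n)\geq \lim_n f(x_n)=\|A+B\|^2$. Combined with Kittaneh's inequality $\|A-B\|\leq\|A+B\|$, this forces $\|A-B\|=\|A+B\|$, which by Proposition \ref{iso max} is exactly $A\oi B$.

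The one genuinely delicate point is the first step of the converse: I need $\|M(A,B)x_n\|\to\|A+B\|$ together with nonpositivity of $\lim\langle Ax_n,Bx_n\rangle$ to force $\|(A+B)x_n\|\to\|A+B\|$. Write, say, $M(A,B)=A$ (the other case is symmetric), so $\|Ax_n\|\to\|A\|=\|A+B\|$ and $\langle Ax_n,Bx_n\rangle\to c\leq 0$. Then $\|(A+B)x_n\|^2=\|Ax_n\|^2+\|Bx_n\|^2+2\langle Ax_n,Bx_n\rangle$; the nonpositive cross term plus the fact that $\|(A+B)x_n\|\leq\|A+B\|=\|A\|=\lim\|Ax_n\|$ sandwiches things so that $\|Bx_n\|^2\to 0$ and $\langle Ax_n,Bx_n\rangle\to 0$, whence $\|(A+B)x_n\|\to\|A+B\|$ and likewise $\|(A-B)x_n\|\to\|A+B\|$. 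So the main obstacle is just bookkeeping with these three limits — extracting a subsequence where all of $\|Ax_n\|$, $\|Bx_n\|$, $\langle Ax_n,Bx_n\rangle$ converge, and checking the inequalities go the right way — rather than anything conceptually hard. I would present the $M(A,B)=A$ and $M(A,B)=B$ cases together by always writing $\|M(A,B)\|=\max(\|A\|,\|B\|)$ and using only that $\|m(A,B)x_n\|\leq\|m(A,B)\|\leq\|M(A,B)\|$.
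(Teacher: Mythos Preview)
Your approach uses the same ingredients as the paper --- Proposition~\ref{iso max}, Kittaneh's inequality, and the expansion $\|(A\pm B)h\|^2=\|Ah\|^2+\|Bh\|^2\pm 2\langle Ah,Bh\rangle$ --- but in both directions you take a detour that leaves a gap.

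For the forward implication you assert that a norming sequence for $M(A,B)$ is automatically norming for $A+B$ and for $A-B$, citing only Proposition~\ref{iso max}. That proposition gives the equality of \emph{norms} $\|A+B\|=\|A-B\|=\|M(A,B)\|$, not the convergence of $\|(A\pm B)x_n\|$; the latter is true but needs its own argument. The paper sidesteps this entirely: it uses only the trivial bound $\lim\|(A+B)x_n\|^2\le\|A+B\|^2=\lim\|M(A,B)x_n\|^2$, expands the left-hand side, and reads off $\lim\bigl(\|m(A,B)x_n\|^2+2\,Re\langle BAx_n,x_n\rangle\bigr)\le 0$, whence $\lim Re\langle BAx_n,x_n\rangle\le 0$.

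For the converse, the step you flag as ``delicate'' --- that $\|(A+B)x_n\|\to\|A+B\|$, equivalently $\|Bx_n\|\to 0$ and $c=0$ --- does not follow from the sandwich you describe: from $\limsup(\|Bx_n\|^2+2c)\le 0$ with $c\le 0$ alone you cannot force $c=0$. (One way to close this is the approximate-eigenvector fact $Ax_n-\|A\|x_n\to 0$ for positive $A$, which gives $\liminf\langle Ax_n,Bx_n\rangle\ge 0$.) But the detour through $f$ is unnecessary: bound $g$ directly by
\[
\|A-B\|^2\ \ge\ g(x_n)\ =\ \|M(A,B)x_n\|^2+\|m(A,B)x_n\|^2-2\langle Ax_n,Bx_n\rangle\ \ge\ \|M(A,B)x_n\|^2-2\langle Ax_n,Bx_n\rangle,
\]
take the limit to obtain $\|A-B\|^2\ge\|A+B\|^2$, and finish with Kittaneh's inequality $\|A-B\|\le\|A+B\|$. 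This is exactly the paper's argument, phrased there as a contradiction.
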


\begin{proof}
	Let $A\oi B$. By Proposition \ref{iso max} we have $\|A+B\|=\|A-B\|=\max(\|A\|, \|B\|)$. 
	Let $\{x_n\}$ a sequnce of unit vectors in $\h$ such that $\lim\limits_{n\to \infty}\|M(A, B)x_n\|=\|A+B\|$. Since $\{(A+B)x_n\} $ is a bounded sequence, it has a convergent sequence and  without loss of generality we assume that 
	\begin{equation}
	\lim\limits_{n\to \infty}\|(A+B)x_n\|^2\leq \|A+B\|^2=\lim\limits_{n\to \infty}\|M(A, B)x_n\|^2\nonumber.
	\end{equation}
	Thus 	$\lim\limits_{n\to \infty}\|m(A, B)x_n\|^2+2Re\langle BA x_n, x_n\rangle\leq 0$ and $\lim\limits_{n\to \infty} Re\langle BA x_n, x_n\rangle\leq 0$.

	Conversely, by the hypothesis  there exists a sequence $\{x_n\} \subset S_{\h}$  such that $\|A+B\|=\lim\limits_{n\to \infty}\|M(A, B)x_n\|$ and  $\lim\limits_{n\to \infty}Re\langle BA x_n, x_n\rangle\leq0.$  From the first condition we have that $\|A+B\|=\max(\|A\|, \|B\|)$.
	Suppose that $A$ and $B$ are not isosceles orthogonal, this means that $\|A-B\|<\|A+B\|$.
	Hence, we have
	\begin{equation*}
	\lim\limits_{n\to \infty}\|(A-B)x_n\|^2\leq\|A-B\|^2 <\|A+B\|^2=\lim\limits_{n\to \infty}\|M(A, B)x_n\|^2.
	\end{equation*}
	This implies that
	$0\leq \lim\limits_{n\to \infty}\|(A+B)x_n\|^2< \lim\limits_{n\to \infty}2 Re\langle BA x_n, x_n\rangle$,
	which is a contradiction. Therefore,  $A\oi B$.
\end{proof}

As a consequence of the previous statement, we have the following characterization of the  isosceles orthogonal condition for elements of $\bh^+$ such that $Re(BA)\geq 0$. This extra condition is well known and it is related with acreetive operator theory. We recall that an operator $T\in \bh$ is called acreetive if in  its Cartesian decomposition
$Re(T)$ is positive.  

\begin{corollary}
	Let $A,B\in \bh^+$ and suppose that $BA$ is acreetive,  then  $A\oi B$ if and only if there exists a sequence $\{x_n\} \subset S_{\h}$  such that $\lim\limits_{n\to \infty}\|(A+B)x_n\|=\|A+B\|$ and $\lim\limits_{n\to \infty}\langle BA x_n, x_n\rangle=0.$
\end{corollary}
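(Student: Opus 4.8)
The plan is to derive both implications from Theorem~\ref{cariso}, the accretiveness hypothesis, and Kittaneh's inequalities~\eqref{cota kittaneh iso}. The only computational input is the pair of pointwise identities, valid for positive $A,B$ and every $x\in\h$:
\begin{align*}
\|(A\pm B)x\|^2 &= \|Ax\|^2+\|Bx\|^2 \pm 2\,Re\langle BAx,x\rangle \\
&= \|M(A,B)x\|^2+\|m(A,B)x\|^2 \pm 2\,Re\langle BAx,x\rangle,
\end{align*}
where the first line uses $\langle Ax,Bx\rangle=\langle BAx,x\rangle$ (as $B=B^*$) and the second only relabels $\{Ax,Bx\}=\{M(A,B)x,m(A,B)x\}$.

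\emph{Necessity.} Assume $A\oi B$. Theorem~\ref{cariso} supplies $\{x_n\}\subset S_\h$ with $\|M(A,B)x_n\|\to\|A+B\|$ and $\lim_n Re\langle BAx_n,x_n\rangle\le 0$. Since $BA$ is acreetive, $Re\langle BAx_n,x_n\rangle=\langle Re(BA)x_n,x_n\rangle\ge 0$ for all $n$, hence $Re\langle BAx_n,x_n\rangle\to 0$. The plus-sign identity and $\|m(A,B)x_n\|^2\ge 0$ give $\|(A+B)x_n\|^2\ge\|M(A,B)x_n\|^2+2\,Re\langle BAx_n,x_n\rangle$, whose limit is $\|A+B\|^2$; as $\|(A+B)x_n\|\le\|A+B\|$ always, we conclude $\|(A+B)x_n\|\to\|A+B\|$, the first required property. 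Then $\|M(A,B)x_n\|^2+\|m(A,B)x_n\|^2=\|(A+B)x_n\|^2-2\,Re\langle BAx_n,x_n\rangle\to\|A+B\|^2$, and since $\|M(A,B)x_n\|^2\to\|A+B\|^2$ this forces $\|m(A,B)x_n\|\to 0$. Finally, by Cauchy--Schwarz, $|\langle BAx_n,x_n\rangle|=|\langle Ax_n,Bx_n\rangle|\le\|M(A,B)x_n\|\,\|m(A,B)x_n\|\le\|M(A,B)\|\,\|m(A,B)x_n\|\to 0$, so $\langle BAx_n,x_n\rangle\to 0$, the second required property.

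\emph{Sufficiency.} Suppose $\{x_n\}\subset S_\h$ satisfies $\|(A+B)x_n\|\to\|A+B\|$ and $\langle BAx_n,x_n\rangle\to 0$; in particular $Re\langle BAx_n,x_n\rangle\to 0$. The minus-sign identity gives $\|(A-B)x_n\|^2=\|(A+B)x_n\|^2-4\,Re\langle BAx_n,x_n\rangle\to\|A+B\|^2$, so $\|A-B\|^2\ge\limsup_n\|(A-B)x_n\|^2=\|A+B\|^2$. On the other hand, \eqref{cota kittaneh iso} gives $\|A-B\|\le\max(\|A\|,\|B\|)\le\|A+B\|$. Hence $\|A-B\|=\|A+B\|$, i.e.\ $A\oi B$.

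I expect the necessity part to be the delicate step. Theorem~\ref{cariso} only controls $Re\langle BAx_n,x_n\rangle$ and $\|M(A,B)x_n\|$, so promoting this to the full complex limit $\langle BAx_n,x_n\rangle\to 0$ and to $\|(A+B)x_n\|\to\|A+B\|$ is exactly where accretiveness does the work: it pins the real part down to $0$ and, through the norm identities, forces $\|m(A,B)x_n\|\to 0$, after which Cauchy--Schwarz handles the imaginary part. For sufficiency the point to be careful about is not to invoke Proposition~\ref{iso max} circularly --- we do not yet know $A\oi B$, hence not that $\|A+B\|=\max(\|A\|,\|B\|)$ --- which is why I argue straight through the minus-sign identity and~\eqref{cota kittaneh iso} rather than re-entering Theorem~\ref{cariso} with the same sequence.
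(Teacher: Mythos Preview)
Your proof is correct and follows essentially the same route as the paper. Both directions invoke Theorem~\ref{cariso} and the pointwise identity $\|(A\pm B)x\|^2=\|Ax\|^2+\|Bx\|^2\pm 2\,Re\langle BAx,x\rangle$; in the necessity part you and the paper alike use accretiveness to pin $Re\langle BAx_n,x_n\rangle$ to $0$, sandwich $\|(A+B)x_n\|$ to its limit, force $\|m(A,B)x_n\|\to 0$, and finish with Cauchy--Schwarz, while for sufficiency the paper argues by contradiction where you argue directly via~\eqref{cota kittaneh iso}, but the content is identical.
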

\begin{proof}
	Let $A\oi B$. ByTheorem \ref{cariso},  there exists a sequence of unit vectors in $\h$, $\{x_n\}$,  such that $\lim\limits_{n\to \infty}\|M(A, B)x_n\|=\|A+B\|$ and $\lim\limits_{n\to \infty}Re\langle BA x_n, x_n\rangle\leq0.$ It follows from 
	\begin{eqnarray}
	\|M(A, B)x_n\|^2 \nonumber &\leq& \|M(A, B)x_n\|^2+\|m(A, B)x_n\|^2+2Re\langle BA x_n, x_n\rangle\\& =&\|(A+B)x_n\|^2 \leq \|A+B\|^2\nonumber,
	\end{eqnarray}
	that $\lim\limits_{n\to \infty}\|(A+B)x_n\|=\|A+B\|$,  $\lim\limits_{n\to \infty}\|m(A, B)x_n\|=0$ and $\lim\limits_{n\to \infty}Re\langle BA x_n, x_n\rangle=0.$
	
	Since $|\langle BA x_n, x_n\rangle| \leq \|M(A, B)x_n\| \|m(A, B) x_n\|$ we infer that $\lim\limits_{n\to \infty}\langle BA x_n, x_n\rangle=0.$
	
	Conversely, by the hypothesis  there exists a sequence $\{x_n\} \subset S_{\h}$  such that $\lim\limits_{n\to \infty}\|(A+B)x_n\|=\|A+B\|$ and  $\lim\limits_{n\to \infty}\langle BA x_n, x_n\rangle=0.$ Suppose that $A$ and $B$ are not isosceles orthogonal, then $\|A-B\|<\|A+B\|$.
	Hence we have
	\begin{equation*}
	\lim\limits_{n\to \infty}\|(A-B)x_n\|^2\leq\|A-B\|^2 <\|A+B\|^2=\lim\limits_{n\to \infty}\|(A+B)x_n\|^2.
	\end{equation*}
	This implies that
	$0<4\lim\limits_{n\to \infty}Re\langle BA x_n, x_n\rangle$,
	which is contradiction. Therefore,  $A\oi B$.
\end{proof}

\begin{corollary}
	Let $A,B\in \bh^+$ and suppose that $BA$ is acreetive. If $A\oi B$ then $A\obj B$ or $B\obj A$.
\end{corollary}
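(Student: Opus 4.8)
The plan is to derive the conclusion from Theorem \ref{cariso} together with the accretivity hypothesis, essentially replaying the first half of the proof of the preceding Corollary and then reading off Birkhoff--James orthogonality from the norming sequence it produces. First I would record, via Proposition \ref{iso max}, that $A\oi B$ forces $\|A+B\|=\|A-B\|=\max(\|A\|,\|B\|)=\|M(A,B)\|$. Then Theorem \ref{cariso} supplies a sequence $\{x_n\}\subset S_\h$ with $\lim_n\|M(A,B)x_n\|=\|A+B\|$ and $\lim_n{\rm Re}\langle BAx_n,x_n\rangle\le 0$. Since $BA$ is accretive, ${\rm Re}\langle BAx_n,x_n\rangle\ge 0$ for every $n$, so in fact $\lim_n{\rm Re}\langle BAx_n,x_n\rangle=0$.

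Next I would extract that $\|m(A,B)x_n\|\to 0$. Using $B=B^{*}$ one has $\langle Ax_n,Bx_n\rangle=\langle BAx_n,x_n\rangle$, hence
\[
\|M(A,B)x_n\|^2+\|m(A,B)x_n\|^2+2{\rm Re}\langle BAx_n,x_n\rangle=\|(A+B)x_n\|^2\le \|A+B\|^2 .
\]
Since $\|M(A,B)x_n\|^2\to\|A+B\|^2$ and $2{\rm Re}\langle BAx_n,x_n\rangle\to 0$, this yields $\limsup_n\|m(A,B)x_n\|^2\le 0$, i.e. $\|m(A,B)x_n\|\to 0$ (and $\|M(A,B)x_n\|\to\|M(A,B)\|$).

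Now I would split into the cases $\|B\|\le\|A\|$ and $\|A\|<\|B\|$; these are symmetric, because ${\rm Re}(AB)=\tfrac{AB+BA}{2}={\rm Re}(BA)\ge 0$, so $AB$ is accretive whenever $BA$ is, and the roles of $A$ and $B$ may be interchanged. In the case $\|B\|\le\|A\|$ we have $M(A,B)=A$, $m(A,B)=B$, hence $\|Ax_n\|\to\|A\|$ and $\|Bx_n\|\to 0$. For any $\lambda\in\mathbb{K}$,
\[
\|(A+\lambda B)x_n\|^2=\|Ax_n\|^2+|\lambda|^2\|Bx_n\|^2+2{\rm Re}\big(\overline{\lambda}\,\langle Ax_n,Bx_n\rangle\big),
\]
and the last two terms tend to $0$ (the cross term is at most $2|\lambda|\,\|A\|\,\|Bx_n\|$ in absolute value), so $\|(A+\lambda B)x_n\|^2\to\|A\|^2$. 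Therefore $\|A+\lambda B\|\ge\|A\|$ for every $\lambda$, i.e. $A\obj B$; in the remaining case the same computation gives $B\obj A$.

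The argument is short, and the one genuinely delicate point is the passage to $\|m(A,B)x_n\|\to 0$: it is precisely here that accretivity of $BA$ is used, to upgrade $\lim_n{\rm Re}\langle BAx_n,x_n\rangle\le 0$ to the equality $\lim_n{\rm Re}\langle BAx_n,x_n\rangle=0$, and one must work with the specific norming sequence furnished by Theorem \ref{cariso}, along which $\|M(A,B)x_n\|\to\|A+B\|$, rather than an arbitrary sequence for which only $\|(A+B)x_n\|\to\|A+B\|$ is known.
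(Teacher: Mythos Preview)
Your argument is correct and is exactly the route the paper intends: the corollary is stated there without proof, as an immediate consequence of the preceding corollary, whose proof already extracts (from Theorem \ref{cariso} together with accretivity of $BA$) a sequence $\{x_n\}\subset S_\h$ with $\|M(A,B)x_n\|\to\|M(A,B)\|$ and $\|m(A,B)x_n\|\to 0$. Your final step, bounding the cross term by $\|A\|\,\|Bx_n\|$ via Cauchy--Schwarz to conclude $\|M(A,B)+\lambda\, m(A,B)\|\ge\|M(A,B)\|$ for all $\lambda$, is the natural way to read off $M(A,B)\obj m(A,B)$, and the observation that ${\rm Re}(AB)={\rm Re}(BA)$ making the case split symmetric is a nice touch (though strictly speaking unnecessary, since the split is on which of $\|A\|,\|B\|$ is larger).
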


\begin{remark}
	Let $A,B\in \bh$ such that $A,B>0$ (i.e. positive and invertible). We will prove that $A$ can not be isosceles orthogonal to $B$. Indeed, suppose that $A\oi B$. Using formulas (26) and (27) in \cite{kittaneh_laa_2004} we have
	$$\max(\|A\|, \|B\|)+\min(\|A^{-1}\|^{-1}, \|B^{-1}\|^{-1})\leq \|A+B\|$$
	$$=\|A-B\|\leq \max(\|A\|, \|B\|)-\min(\|A^{-1}\|^{-1},\|B^{-1}\|^{-1}), $$
	$$\Rightarrow 0<2\min(\|A^{-1}\|^{-1}, \|B^{-1}\|^{-1})\leq 0,$$ 
	which is a contradiction.  
	
	A natural question is whether $ A $ and $ B $ can be Birkhoff-James orthogonal. Suppose that $A\obj B$, then for any $\lambda >0,$ we have,
	$$\|A\|\leq \|A- \lambda B\|\leq \max(\|A\|, \lambda \|B\|)-\min(\|A^{-1}\|^{-1}, \lambda \|B^{-1}\|^{-1})<\max(\|A\|, \lambda \|B\|),$$which is clearly a contradiction. This proves that $A$ is not Birkhoff-James orthogonal to $B$.
\end{remark}
Next results and comments are related to isosceles orthogonality between positive operators and projections.

\begin{proposition}
	For any $P\in \bh^+$,  
	$P\oi I$ if and only if  $P=0.$
\end{proposition}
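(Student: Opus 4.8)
The plan is to obtain both implications directly from Proposition~\ref{iso max}, combined with the elementary identity $\|P+I\| = 1 + \|P\|$ valid for positive $P$.

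The reverse implication is trivial and I would dispatch it first: if $P = 0$, then $\|P + I\| = \|I\| = 1 = \|{-I}\| = \|P - I\|$, so $P \oi I$ by the definition of isosceles orthogonality used for positive operators.

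For the forward implication, assume $P \oi I$. Since $P, I \in \bh^+$, Proposition~\ref{iso max} gives
\[
\|P + I\| = \|P - I\| = \max(\|P\|, \|I\|) = \max(\|P\|, 1).
\]
Independently, because $P + I$ is a positive operator, its norm equals the supremum of its quadratic form:
\[
\|P + I\| = \sup_{\|x\|=1}\langle (P+I)x, x\rangle = 1 + \sup_{\|x\|=1}\langle Px, x\rangle = 1 + \|P\|,
\]
the last equality again using $P \geq 0$. Comparing the two displays yields $1 + \|P\| = \max(\|P\|, 1)$. Since $1 + \|P\| > \|P\|$, the maximum on the right must equal $1$, and hence $\|P\| = 0$, i.e. $P = 0$.

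The only step requiring a word of justification — and the closest thing here to an obstacle — is the equality $\|P+I\| = 1 + \|P\|$, which rests on the standard fact that for a positive operator $S$ one has $\|S\| = \sup_{\|x\|=1}\langle Sx, x\rangle$ (equivalently, that the norm of a self-adjoint operator coincides with its spectral radius, together with $\sigma(P+I) = 1 + \sigma(P)$). Everything else is a one-line substitution into Proposition~\ref{iso max}, so I do not anticipate any genuine difficulty.
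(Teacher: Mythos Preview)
Your proof is correct and rests on the same key ingredient as the paper's, namely Proposition~\ref{iso max}, which gives $\|P+I\|=\max(\|P\|,1)$. Where you finish in one stroke via the identity $\|P+I\|=1+\|P\|$ for $P\ge 0$, the paper instead splits into the cases $\|P\|\ge 1$ and $\|P\|<1$ and in each case derives a contradiction by expanding $\|(P+I)x\|^2$ for a suitable unit vector $x$. Your route is shorter and avoids the case analysis; the paper's is more hands-on but does not explicitly invoke the spectral identity $\|S\|=\sup_{\|x\|=1}\langle Sx,x\rangle$ for positive $S$.
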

\begin{proof}
	By Proposition \ref{iso max},  $\|P-I\|=\|P+I\|=\max\{1;\|P\|\}$. 
	Suppose $\|P\|\geq 1.$ Then $\|P-I\|=\|P+I\|=\|P\|$, which is a contradiction since $\|(P+I)x\|^2 >\|Px\|^2$ for all $x\neq 0$. On the other hand, if $\|P\|<1$ and $P\neq 0$, consider $x\neq 0$, $\|x\|=1$ and $Px\neq 0$. Then, 
	$$0\leq \|Px+x\|^2=\|Px\|^2+2\left\langle Px, x\right\rangle+\|x\|^2\leq 1\Rightarrow \|Px\|^2+2\left\langle Px, x\right\rangle=0$$
	$$\Rightarrow\|Px\|^2=2\left\langle Px, x\right\rangle=0. $$
	This completes the proof.	
\end{proof}
From the above, in finite-dimensional context, we deduce if $A=UP\neq 0$, with $U$ unitary and $P\geq 0$, then $A$ and $U$ can not be isosceles orthogonal. Moreover, using Theorem IX.7.2 in \cite{bhatia_book}, we obtain that $\|A-U\|<\|A+U\|$.

In case of orthogonal projections isosceles orthogonality implies disjoint support, as we show in the next result.
\begin{proposition} \label{ortogonalidad proyecciones}
	Let $P_S,P_T$ be orthogonal projections with $S\neq T$. Then, $P_S\oi P_T$ if and only if $ P_SP_T=0$.  In particular, if $P_S\oi P_T$ then $P_S\obj P_T$ and $P_T\obj P_S$.
\end{proposition}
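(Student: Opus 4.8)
The plan is to treat the two implications of the equivalence separately, after disposing of the trivial case $S=\{0\}$ or $T=\{0\}$ (in which $P_S\oi P_T$, $P_SP_T=0$, $P_S\obj P_T$ and $P_T\obj P_S$ all hold automatically). So assume $S,T\neq\{0\}$, so that $\|P_S\|=\|P_T\|=1$. I will also use throughout that, since $(P_SP_T)^{*}=P_TP_S$, the condition $P_SP_T=0$ is the same as $P_TP_S=0$; that is, for orthogonal projections ``disjoint support'' in the sense of this paper is just $S\perp T$.

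For the implication $P_SP_T=0\ \Rightarrow\ P_S\oi P_T$, I would only note that $P_T^{*}P_S=P_TP_S=0$ and apply Proposition~\ref{soporte implica bj e i} with $A=P_S$ and $B=P_T$; this gives $P_S\perp_{R}P_T$ (hence $P_S\oi P_T$) together with $P_S\obj P_T$ and $P_T\obj P_S$. Combined with the converse proved below, this also yields the final ``in particular'' assertion: if $P_S\oi P_T$ then $P_SP_T=0$, and then the two Birkhoff--James relations follow again from Proposition~\ref{soporte implica bj e i}.

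For the converse $P_S\oi P_T\ \Rightarrow\ P_SP_T=0$, the decisive step is Proposition~\ref{iso max}: since $P_S,P_T\in\bh^{+}$, it forces $\|P_S+P_T\|=\|P_S-P_T\|=\max(\|P_S\|,\|P_T\|)=1$. I would then use that $P_S+P_T\geq0$ by testing it against unit vectors of $S$. For $x\in S$ with $\|x\|=1$, using that $P_S$ and $P_T$ are self-adjoint idempotents, $\langle(P_S+P_T)x,x\rangle=\|P_Sx\|^{2}+\|P_Tx\|^{2}=1+\|P_Tx\|^{2}$, and this cannot exceed $\|P_S+P_T\|=1$; hence $P_Tx=0$ for every $x\in S$. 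Therefore $S\subseteq N(P_T)$, so $P_TP_S=0$, and taking adjoints $P_SP_T=0$, as required.

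The argument is short and I do not anticipate a genuine obstacle; the only points needing a little care are the reduction to nontrivial $S,T$ (so that Proposition~\ref{iso max} delivers exactly the value $1$ for $\|P_S+P_T\|$, rather than $\max(\|P_S\|,\|P_T\|)$ with one factor possibly $0$) and the elementary observation that $P_SP_T=0$, $P_TP_S=0$ and $S\perp T$ are the same condition for orthogonal projections.
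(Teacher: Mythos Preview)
Your argument is correct. Both you and the paper first reduce to $\|P_S+P_T\|=\|P_S-P_T\|=1$ (you via Proposition~\ref{iso max}, the paper via Kittaneh's inequality~\eqref{cota kittaneh iso}, which amounts to the same thing), but the conclusions diverge from there. The paper invokes two external facts: a result attributed to Kato that $\|P_S-P_T\|=1$ forces $P_S$ and $P_T$ to commute, and then Deutsch's identification $P_SP_T=P_TP_S=P_{S\cap T}$ for commuting orthogonal projections; it finishes by noting that a nonzero $h\in S\cap T$ would give $\|(P_S+P_T)h\|=2\|h\|$, contradicting $\|P_S+P_T\|=1$. Your route is more direct and entirely self-contained: from $\|P_S+P_T\|=1$ and positivity you get $\langle(P_S+P_T)x,x\rangle\le 1$ for every unit $x$, and testing on $x\in S$ immediately yields $P_Tx=0$, hence $P_TP_S=0$. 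This bypasses both cited references and in particular never needs the commutation step; the only ingredients beyond Propositions~\ref{soporte implica bj e i} and~\ref{iso max} are the elementary identities $\langle P_Sx,x\rangle=\|P_Sx\|^{2}$ and $(P_SP_T)^{*}=P_TP_S$. Your handling of the degenerate case $S=\{0\}$ or $T=\{0\}$ is also a small improvement in rigor, since the paper's line $\|P_S-P_T\|=\|P_S+P_T\|=1$ tacitly assumes both projections are nonzero.
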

\begin{proof}
	Suppose $P_S\oi P_T$. By equation \eqref{cota kittaneh iso}, 
	$$\|P_T-P_S\|\leq \max(\|P_S\|,\|P_T\|)\leq \|P_S+P_T\|.$$
	Then by hypothesis, $\|P_S-P_T\|=\|P_S+P_T\|=1$.	
	On the other hand, by \cite{kato}, $\|P_S-P_T\|=1$ if and only if  $P_T$ and $P_S$ commute and, by \cite{deutsch}, we have $P_TP_S=P_SP_T=P_{T\cap S}$. If there exists  $h\in T\cap S $ with $h\neq 0$, then $\|(P_T+P_S)h\|=2\|h\|$ and this implies $\|P_T+P_S\|\geq 2$, which is a contradiction. Therefore, $T\cap S=\{0\}$ and $P_SP_T=P_{\{0\}}=0$.
	
\end{proof}
However, it is not true that there exists a equivalence between disjoint support and Birkhoff-James orthogonality, even in the case of orthogonal projections. For instance, consider in $\R^3$ the projections onto the planes $z=0$ and $x=0$, $P_{z=0}$ and $P_{x=0}$, respectively. Clearly, $P_{z=0}\obj P_{x=0}$ but $P_{z=0}\cap P_{x=0}=P_{x=0,z=0}\neq 0$, which means they have not disjoint support.

\section{Relations between different types of orthogonality}
In this short section we study the relations between Birkhoff-James orthogonality and isosceles orthogonality. Before proceeding any further, let us mention the following fact that serves as a motivation behind our exploration in this section.

Bottazzi et. al. studied the equivalence of Birkhoff-James orthogonality and isosceles orthogonality of positive operators $A,B$ in a $p-$Schatten ideal in \cite{bcmz}. Indeed, for every $A, B \in \bh^+$ and  $1<p\leq 2$, they proved that
$A\objp B$ and $A\oip B$ are equivalent notions.

However, it is not difficult to observe that there are many examples in $\bh$ (and more generally, in Banach spaces) that show $\obj$ and $\oi$ are independent orthogonality types and none of them imply the other. Our purpose in this section is to establish relations between these two orthogonality types, in the sense that we determine which additional conditions may be required to have $``\obj \Rightarrow \oi"$ and vice versa. Recall that $\X$ is a real or complex normed space.
\begin{proposition}
	Let $x,y\in \X$ and assume that $(x+y)\obj y$ and $(x-y)\obj y.$ Then $x\oi y$.
\end{proposition}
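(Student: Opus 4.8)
The plan is to translate each Birkhoff--James hypothesis into a family of scalar norm inequalities and then read off the isosceles identity by plugging in two convenient values of the scalar parameter. First I would note that $(x+y)\obj y$ says exactly that $\|x+y\|\le\|(x+y)+\lambda y\|=\|x+(1+\lambda)y\|$ for every $\lambda\in\mathbb{K}$; since $1+\lambda$ ranges over all of $\mathbb{K}$ as $\lambda$ does, this is equivalent to $\|x+y\|\le\|x+\mu y\|$ for all $\mu\in\mathbb{K}$. Symmetrically, $(x-y)\obj y$ is equivalent to $\|x-y\|\le\|x+\mu y\|$ for all $\mu\in\mathbb{K}$. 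Thus both $\|x+y\|$ and $\|x-y\|$ realize $\inf_{\mu}\|x+\mu y\|$.

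Next I would specialize the parameter. Taking $\mu=-1$ in the first family gives $\|x+y\|\le\|x-y\|$, and taking $\mu=1$ in the second gives $\|x-y\|\le\|x+y\|$; hence $\|x+y\|=\|x-y\|$. In a real normed space this is precisely $x\oi y$, and the proof is complete. It is worth recording that $\mu=0$ gives the bonus conclusion $\|x\|=\|x+y\|=\|x-y\|$, so in particular $x\obj y$ holds as well, which ties this proposition to the theme of the section.

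The delicate point is the complex case: there $x\oi y$ additionally demands $\|x+iy\|=\|x-iy\|$, and the argument above, applied with $\mu=\pm i$, only yields the lower bounds $\|x\pm y\|\le\|x+iy\|$ and $\|x\pm y\|\le\|x-iy\|$, which say nothing about how $\|x+iy\|$ and $\|x-iy\|$ compare with each other. So the main obstacle is exactly this second identity. I would deal with it either by stating the result for real normed spaces, or by imposing the extra hypotheses $(x+iy)\obj y$ and $(x-iy)\obj y$ (which, by the same reasoning, force $\|x\pm iy\|=\inf_\mu\|x+\mu y\|$) and then running the identical two-value specialization with $\mu=\pm i$; with that proviso the complex case becomes no harder than the real one.
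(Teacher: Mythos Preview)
Your argument is essentially identical to the paper's: rewrite $(x+y)\obj y$ as $\|x+y\|\le\|x+\mu y\|$ for all $\mu\in\mathbb{K}$ via the substitution $\mu=1+\lambda$, do the same for $x-y$, then specialize to $\mu=\mp 1$ to obtain the two opposite inequalities yielding $\|x+y\|=\|x-y\|$.

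Your additional scrutiny of the complex case is on point and in fact sharper than the paper itself. The section declares $\X$ to be real or complex and adopts the two-condition definition \eqref{defiiso2} of $\oi$ in the complex setting, yet the paper's proof, exactly like yours, establishes only $\|x+y\|=\|x-y\|$ and is silent about $\|x\pm iy\|$. Your diagnosis---that the given hypotheses only force $\|x+iy\|$ and $\|x-iy\|$ to dominate the common minimum $\|x+y\|=\|x-y\|$, without relating them to one another---is correct, and your proposed remedies (restrict to real $\X$, or add the hypotheses $(x\pm iy)\obj y$ and rerun the same specialization with $\mu=\pm i$) are precisely what would close the gap.
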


\begin{proof}
	By the hypothesis, we have,
	$\|x+y\|\leq \|x+y+\lambda y\|\ \forall\ \lambda \in \mathbb{K}.$
	Taking $\beta=1+\lambda, $ we have, $\|x+y\|\leq \|x+\beta y\|$. In particular for $\beta=-1$, we get $\|x+y\|\leq \|x-y\|$. Analogously, from the hypothesis $(x-y)\obj y,$  we obtain $\|x-y\|\leq \|x+y\|$. This proves that $x\oi y$ and completes the proof of the proposition.
\end{proof}

In order to address the converse question, we introduce the concept of strongly isosceles orthogonality in real Banach spaces.
\begin{definition}
	Let $x,y\in \X$. We say that $x$ is strongly isosceles orthogonal to $y$, written as $x\osi y$ if
	\begin{enumerate}
		\item $x\oi y$.
		\item there exists a real sequence $\{\lambda_n\}_{n\in \N}$, with $\lambda_n>0$, such that $\lim\limits_{n\to\infty}\lambda_n=0$ and $x\oi\lambda_n y$ for all  $n\in \N$.
	\end{enumerate}
\end{definition}

In view of the above definition, we obtain the following statement.

\begin{theorem}
	Let $x,y\in \X$. Then $x\osi y$ implies  $x\orbj y$ and in particular if $\X$ is a real normed space then $x\osi y$ implies $x\obj y$.
\end{theorem}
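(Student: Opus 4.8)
The plan is to unwind the definition of $\osi$ and prove directly that $\|x\|\le\|x+\lambda y\|$ for every real scalar $\lambda$; since this is precisely the defining inequality of $\orbj$, and since $\orbj$ coincides with $\obj$ when $\mathbb{K}=\mathbb{R}$, both assertions follow at once. First I would use only condition $(2)$ in the definition of $\osi$: there is a sequence $\lambda_n>0$ with $\lambda_n\to 0$ and $x\oi\lambda_n y$ for every $n$, so $\|x+\lambda_n y\|=\|x-\lambda_n y\|$. Applying the triangle inequality to $2x=(x+\lambda_n y)+(x-\lambda_n y)$ gives $2\|x\|\le\|x+\lambda_n y\|+\|x-\lambda_n y\|=2\|x+\lambda_n y\|$, hence
\[
\|x\|\le\|x+\lambda_n y\|=\|x-\lambda_n y\|\qquad\text{for all }n\in\N.
\]

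The key step is to propagate this from the sequence of small scales $\lambda_n$ to all real scales, and here both the convexity of the norm and the fact that $\lambda_n\to 0$ are essential. Fix $\lambda\in\R$ with $\lambda>0$ and choose $n$ so large that $0<\lambda_n<\lambda$; with $t=\lambda_n/\lambda\in(0,1)$ one has the identity $x+\lambda_n y=(1-t)x+t(x+\lambda y)$, so
\[
\|x+\lambda_n y\|\le(1-t)\|x\|+t\|x+\lambda y\|,
\]
and combining this with $\|x\|\le\|x+\lambda_n y\|$ yields $t\|x\|\le t\|x+\lambda y\|$, i.e. $\|x\|\le\|x+\lambda y\|$. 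For $\lambda<0$ I would run the identical argument with $x-\lambda_n y$ written as a convex combination of $x$ and $x+\lambda y$, using $\|x\|\le\|x-\lambda_n y\|$; the case $\lambda=0$ is trivial. This establishes $\|x+\lambda y\|\ge\|x\|$ for all $\lambda\in\R$, that is, $x\orbj y$, and when $\X$ is real this is exactly the definition of $x\obj y$.

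I expect the only subtle point to be the propagation step: with a single scale $\lambda_1$ the inequality $\|x\|\le\|x+\lambda_1 y\|$ does \emph{not} force $\|x\|\le\|x+\lambda y\|$ for $0<\lambda<\lambda_1$ (a convex function may dip below its value at $0$ and rise again), so the hypothesis $\lambda_n\to 0$ genuinely cannot be dropped. I would also remark that condition $(1)$ in the definition of $\osi$ is not actually used in this argument, and that in the complex case only the real-isosceles relation $\|x+\lambda_n y\|=\|x-\lambda_n y\|$ coming from $x\oi\lambda_n y$ enters, since $\orbj$ refers only to real multiples of $y$.
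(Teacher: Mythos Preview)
Your proof is correct. The route you take differs from the paper's in one point: instead of invoking James's Theorem~4.1 (which asserts that $x\oi y$ implies $\|x\|\le\|x+\lambda y\|$ for all real $|\lambda|\ge 1$) as a black box at each scale $\lambda_n$, you derive the base inequality $\|x\|\le\|x\pm\lambda_n y\|$ directly from the triangle inequality applied to $2x=(x+\lambda_n y)+(x-\lambda_n y)$, and then propagate to all real $\lambda$ by writing $x\pm\lambda_n y$ as a convex combination of $x$ and $x+\lambda y$. This makes your argument entirely self-contained and, as you correctly observe, shows that condition~(1) in the definition of $\osi$ is never used. The paper itself notes in the Remark immediately following the theorem that such a convexity-only argument is possible; you have supplied it explicitly. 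What the paper's approach buys is brevity if one is willing to quote James, while yours buys transparency and the sharper observation that only condition~(2) is doing the work.
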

\begin{proof}
	By Theorem 4.1 of \cite{james2}, it follows that $ \|x\|\leq \|x+\lambda y\|$ for $\lambda\in  \R$, $|\lambda|\geq 1.$ Therefore, we only have to prove the Birkhoff-James orthogonality condition for $|\lambda|<1$. Let $\lambda_0\in \R$ such that $|\lambda_0|<1$. By the hypothesis, there exists $n_0\in \N$ such that $0<\lambda_{n_0}<|\lambda_0|$ and $x\oi \lambda_{n_0}y$. Then, $\beta=\frac{\lambda_0}{\lambda_{n_0}}$ satisfies $|\beta|>1$ and, by the cited result of James \cite{james2}, we get
	$\|x\|\leq \|x+\beta\lambda_{n_0} y\|$, since $|\beta|>1$ and $x\oi \lambda_{n_0}y$. However, this is clearly equivalent to the following: 
	$$\|x\|\leq \left\| x+\frac{\lambda_0}{\lambda_{n_0}}\lambda_{n_0}y\right\| =\|x+\lambda_0 y\|.$$
	This completes the proof of the proposition.
\end{proof}

\begin{remark}
	Using the convexity of the norm function, it is possible to show that only condition  (ii) in the definition of strongly isosceles orthogonality is sufficient to ensure Birkhoff-James orthogonality of the corresponding elements. However, we include condition (i) in the definition of strongly isosceles orthogonality because we are trying to address the question that asks Isosceles orthogonality, along with which additional conditions, implies Birkhoff-James orthogonality.
\end{remark}

We have already discussed that Roberts orthogonality is stronger and more restrictive than either of Birkhoff-James orthogonality and isosceles orthogonality. Moreover, it is obvious that $A\oro B\Rightarrow A\osi B.$ In the next two examples we show that the converse of this statement is not necessarily true. We deliberately give the examples using different norms on $ \bh, $ to make them more illustrative.

\begin{example} \label{si not roberts in bh}
	Let $ \h $ be the two-dimensional real Hilbert space. Consider the Banach space $\bh,$ endowed with the usual uniform norm. Let $A$ and $B$ matrices considered in Example \ref{ejemplo iso no sop dis} item (1). Then,
	\be
	\item $\|A+B\|=\|A-B\|=4$
	\item Let $\{\lambda_n\}_{n\in \N}$ be a sequence such that $\lambda_n\in (0,1)$ and $\lambda_n\to 0.$ We have,
	$$A+\lambda_nB=\begin{pmatrix}
	4&0\\
	0&3+\lambda_n
	\end{pmatrix}\Rightarrow \|A+\lambda_nB\|=4.$$
	On the other hand,
	$$A-\lambda_nB=\begin{pmatrix}
	4&0\\
	0&3-\lambda_n
	\end{pmatrix}\Rightarrow \|A-\lambda_nB\|=4.$$
	Conditions (1) and (2) together imply that $A\osi B$.
	\item However, if we consider $\lambda=5$, we have 
	$$A+5B=\begin{pmatrix}
	4&0\\
	0&8
	\end{pmatrix}\Rightarrow \|A+5B\|=8\ \ \text{and}\ A-5B=\begin{pmatrix}
	4&0\\
	0&-2
	\end{pmatrix}\Rightarrow \|A-5B\|=4.$$
	Therefore, $A$ is not Roberts orthogonal to $B$. 
	\ee
\end{example}
\begin{example} \label{si not roberts in b1}
	Let $ \h $ be the two-dimensional real Hilbert space. Consider the Banach space $\bh,$ but now endowed with the $1-$Schatten norm.
	
	Let $A=\begin{pmatrix}
	1&0\\
	0&-2
	\end{pmatrix}$ and $I=\begin{pmatrix}
	1&0\\
	0&1
	\end{pmatrix}\in\bh$. Then,
	\be
	\item $\|A+I\|_1=2+1=\|A-I\|_1$.
	\item Let $\{\lambda_n\}_{n\in \N}$ be a sequence such that $\lambda_n\in (0,1)$ and $\lambda_n\to 0$,
	$$A+\lambda_nI=\begin{pmatrix}
	1+\lambda_n&0\\
	0&-2+\lambda_n
	\end{pmatrix}\Rightarrow \|A+\lambda_nB\|_1=|1+\lambda_n|+|-2+\lambda_n|=3$$
	$$A-\lambda_nI=\begin{pmatrix}
	1-\lambda_n&0\\
	0&-2-\lambda_n
	\end{pmatrix}\Rightarrow \|A-\lambda_nB\|_1=|1-\lambda_n|+|-2-\lambda_n|=3.$$
	Clearly, conditions (1) and (2) together imply that $A\osi I$.
	\item But if we consider $\lambda=2$, we have, 
	$$A+2I=\begin{pmatrix}
	3&0\\
	0&0
	\end{pmatrix}\Rightarrow \|A+2I\|_1=3\ \ \text{and}\ A-2I=\begin{pmatrix}
	-1&0\\
	0&-4
	\end{pmatrix}\Rightarrow \|A-2I\|_1=5.$$
	This proves that $A$ is not Roberts orthogonal to $B$. 
	\ee	
\end{example}

\medskip

\section{Acknowledgements}
	The research of Dr. Tamara Bottazzi and Dr. Cristian Conde is partially supported by National Scientific and Technical Resesarch Conuncil of Argentina (CONICET)and ANPCyT PICT 2017-2522. 
	
	The research of Dr. Debmalya Sain is sponsored by Dr. D. S. Kothari Post-doctoral Fellowship, under the mentorship of Professor Gadadhar Misra. Dr. Sain feels elated to acknowledge the wonderful hospitality of the lovely couple Mr. Subhro Jana and Mrs. Poulomi Mallik.


%
%



\end{document}